\newtheorem{Def}{Definition}[section]
\newtheorem{Prop}{Proposition}[section]
\newtheorem{Lema}{Lemma}[section]
\newtheorem{Teo}{Theorem}[section]
\newtheorem{Cor}{Corollary}[section]
\begin{document}

%%%%%%%%%%%%%%%%%%%%%%%%%%%%%%%%%%
\title{\textsc{Homogeneous links and closed homogeneous braids}}

\author{Marithania Silvero \footnote{Partially supported by MTM2010-19355 and FEDER.}\\ \\
Departamento de Álgebra.
Facultad de Matemáticas. \\
Universidad de Sevilla.
Spain.\\
{\tt marithania@us.es}\\ \\
}

\maketitle

%%%%%%%%%%%%%%%%%%%%%%%%%%%%%%

\noindent \textbf{Abstract} \,
Is any positive knot the closure of a positive braid? No. But if we consider positivity in terms of the generators of the braid group due to Birman, Ko and Lee, then the answer is yes. In this paper we prove that the same occurs when considering homogeneity.

\vspace{0.1cm}

In the way we prove that the plumbing of two surfaces is a BKL-homogeneous surface if and only if both summands are BKL-homogeneous surfaces, a parallel result to that of Rudolph involving quasipositive surfaces. BKL-homogeneous surfaces are, in fact, a natural generalization of quasipositive surfaces.

\bigskip

\noindent \textbf{Keywords:} Homogeneous links. Pseudoalternating links. Braids. Seifert surface. Stallings plumbing.
\vspace{1.2cm}
\mbox{ }

%%%%%%%%%%%%%%%%%%%%%%%%%%%%%%%%%%%%%%%%%%%%%%%%%%%%%%%%%%%%%%%%%%%%%%%%%%%%%%%%%%%%%%%%%%%%%%%%%%%%
%\pagebreak

\section{Introduction}

The notion of ``homogeneity'' exists for links and also for braids. In the latter case this notion is defined with respect to a given generating set, and depends on this choice. In this paper we study the connection between these notions.

\vspace{0.2cm}

Homogeneous links were introduced by Peter Cromwell in \cite{CromwellHom}; a link is homogeneous if it has a homogeneous diagram, that is, a diagram in which all the edges of each block of its Seifert graph have the same sign (see Section 2).

\vspace{0.2cm}

The braid group on $n$ strands, $B_n$, has a standard presentation due to Artin (\cite{Artin1}, \cite{Artin2}) with $\sigma_1, \ldots, \sigma_{n-1}$ as generators. A braid is said to be Artin-homogeneous if it can be represented by a homogeneous standard braid word, that is, a braid word where each Artin generator appears always with the same sign. Note that usually these braids are simply called homogeneous braids \cite{Stallings}; as we are going to work with two different presentations of $B_n$, we will refer to these braids as Artin-homogeneous.

\vspace{0.2cm}

It is obvious that the closure of any Artin-homogeneous braid is a homogeneous link. However, not every homogeneous link is the closure of an Artin-homogeneous braid, as we prove in Proposition \ref{homnobraidhom}.

\vspace{0.2cm}

J. S. Birman, K.H. Ko and S.J. Lee gave in \cite{BirmanKoLee} a new presentation of the braid group $B_n$ on $n$ strands; in this presentation, generators are given by $\sigma_{rs}$, with $1\leq r < s\leq n$, where $\sigma_{rs} = (\sigma_{s-2} \ldots \sigma_{r})^{-1} \sigma_{s-1} (\sigma_{s-2} \ldots \sigma_{r})$. Define BKL-homogeneous braids as those braids which can be expressed by a homogeneous BKL-word, that is, a braid word using Birman-Ko-Lee generators with each generator appearing always with the same sign. A link is BKL-homogeneous if it is the closure of a BKL-homogeneous braid.

\vspace{0.2cm}

In this paper we prove that every homogeneous link is BKL-homogeneous (Corollary \ref{corolariohom1}). The converse is not true: we will see that the non-homogeneous knot $9_{48}$ is the closure of a BKL-homogeneous braid.

\vspace{0.2cm}

To show that homogeneous implies BKL-homogeneous, we extend a result by Lee Rudolph \cite{Rudolph1} to the case of BKL-homogeneous surfaces. More precisely, BKL-homogeneous braided surfaces are Seifert surfaces whose boundaries are BKL-homogeneous links. A surface is BKL-homogeneous if it is ambient isotopic to a BKL-homogeneous braided surface. The main result of this paper is the following:

\begin{Teo}\label{Teorema}
Let $S = S_1 * S_2$ be a Stallings plumbing of Seifert surfaces $S_1$ and $S_2$; then $S$ is a BKL-homogeneous surface if and only if both $S_1$ and $S_2$ are BKL-homogeneous surfaces.
\end{Teo}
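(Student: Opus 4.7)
The plan is to establish the two implications in turn, following a strategy parallel to Rudolph's treatment of quasipositive surfaces under Murasugi sum (\cite{Rudolph1}). The sufficiency direction is the easier half: assuming each $S_i$ is BKL-homogeneous, I would realise $S_i$ as a BKL-homogeneous braided surface $\Sigma_i$, that is, as the Seifert surface of a closed braid described by a homogeneous BKL-word $w_i$. I would then exhibit an explicit BKL-homogeneous braided model for $S_1 * S_2$. Concretely, the plumbing can be performed at the level of braid words: after suitable BKL-stabilisation, a word representing $S_1 * S_2$ can be obtained by concatenating $w_1$ with (a shifted copy of) $w_2$, and this word remains homogeneous because no BKL-generator acquires a new sign.

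For the converse, I would start from a BKL-homogeneous braided surface $\Sigma$ representing $S$ and isotope the plumbing disk $D \subset S$ into a position adapted to the band structure of $\Sigma$. Once $D$ is transverse to all bands and lies in controlled position relative to the binding disks, restricting $\Sigma$ to each side of $D$ yields braided surface realisations of $S_1$ and $S_2$; since $\Sigma$ is BKL-homogeneous, each restriction inherits a homogeneous BKL-word, so each $S_i$ is BKL-homogeneous. To reorder bands and separate those belonging to each summand I would exploit the BKL commutation relations, $\sigma_{rs}\sigma_{tu} = \sigma_{tu}\sigma_{rs}$ whenever the chords $(r,s)$ and $(t,u)$ are either disjoint or strictly nested, which allow bands to slide past one another without affecting the sign pattern.

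The main obstacle I expect is precisely this last step: showing that, from an \emph{arbitrary} BKL-homogeneous presentation of $S$, one can always produce one adapted to a given plumbing decomposition. This is the analog of the delicate part of Rudolph's theorem and will require careful, band-by-band manipulations that preserve BKL-homogeneity at every stage. I expect that the combinatorial flexibility afforded by the BKL commutation relations, together with ambient isotopies of $D$ within $S$, will suffice to reduce the problem to a finite check on how the bands of $\Sigma$ can meet $D$.
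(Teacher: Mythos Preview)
Your overall strategy---adapting Rudolph's quasipositive argument---is exactly what the paper does, but there are two genuine gaps that the paper fills with machinery you have not identified.

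First, your ``easier half'' is not as easy as you suggest. The theorem concerns an \emph{arbitrary} Stallings plumbing $S_1*S_2$, specified by chosen $n_i$-patches $N_i\subset S_i$ and a gluing homeomorphism. Concatenating (or shuffling) a homogeneous BKL-word $w_1$ with a shifted $w_2$ produces only the \emph{braided} Stallings plumbing, in which the gluing patches are the rightmost disc of $S(w_1)$ and the leftmost disc of $S(w_2)$. To conclude that the given $S_1*S_2$ is ambient isotopic to this braided model you must first isotope each $n_i$-star $\varphi_i$ (whose neighbourhood is $N_i$) into a single disc of some BKL-homogeneous braided realisation of $S_i$. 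This is the content of Corollary~\ref{estrellaendisco} in the paper, and it is not free: it already requires the full technical lemma below.

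Second, for the converse the correct target is not to make the plumbing region ``transverse to all bands'' but to push the combined $(n_1+n_2)$-star $\psi=\varphi_1\cup\varphi_2$ entirely into one disc of a BKL-homogeneous braided model of $S$; only then does the separating sphere cut $S'$ into two braided subsurfaces inheriting homogeneous BKL-words. Both directions thus reduce to the same statement (Lemma~\ref{reducefi}): given a minimal $n$-star $\varphi$ on a BKL-homogeneous braided surface with $\delta_b(\varphi)>0$, there is an ambient isotopy to another BKL-homogeneous braided surface carrying $\varphi$ to $\varphi'$ with $\delta_b(\varphi')<\delta_b(\varphi)$. The paper proves this using an \emph{inflation} (inserting a new disc and a new band whose sign is chosen equal to $e(b_{z_0})$, where $b_{z_0}$ is the band crossed by the tail of a suitable ray) followed by several \emph{slides}; the commutation relations (slips) alone do not suffice, since the bands adjacent to $b_{z_0}$ on $d_{x_0}$ need not commute with it. The specifically homogeneous (as opposed to quasipositive) point is checking that the inflated band's sign and each slide preserve the global sign pattern---your proposal anticipates ``band-by-band manipulations'' but does not isolate this lemma or the essential role of inflation, which is the heart of the argument.
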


%There is a generalization of homogeneous links which is defined as follows. A primitive flat surface is a projection surface of a positive or negative diagram with no nested Seifert circles. A generalized flat surface is a special iterated Stallings plumbing of primitive flat surfaces. E.J. Mayland and K. Murasugi in \cite{Pseudoalternantes} defined pseudoalternating links as the boundaries of generalized flat surfaces. It is not difficult to see that all homogeneous links are pseudoalternating links.
%
%\vspace{0.2cm}

As a consequence of Theorem \ref{Teorema} we prove in Corollary \ref{corolariopseudoalt} that pseudoalternating links are BKL-homogeneous, where pseudoalternating links are a family of links introduced by E.J. Mayland and K. Murasugi in \cite{Pseudoalternantes} which contains the family of homogeneous links.

\vspace{0.2cm}

The plan of the paper is as follows. In Section 2 we recall the definitions of homogeneous and pseudoalternating links. Section 3 is devoted to set some definitions involving braids: homogeneous braids and BKL-homogeneous braids; we also show the relation between homogeneous links and homogeneous braids. In Sections 4 we recall concepts such as star and patch, which will be used in Sections 5 and 6 in order to define braided surfaces and a special type of Stallings plumbing involving them. In Section 7 we present the main Theorem, and prove that every homogeneous link is BKL-homogeneous.

\section{Homogeneous and pseudoalternating links}

Given an oriented diagram $D$ of a link $L$, the surface $S_D$ obtained by applying Seifert's algorithm \cite{LibroCromwell} is known as projection surface of $L$ associated to $D$.

\vspace{0.2cm}

Given a projection surface $S_D$, we can construct a graph $G_D$ as follows: associate a vertex to each Seifert disc and draw an edge connecting two vertices if and only if their associated Seifert discs are connected by a band; each edge must be labelled with the sign $+$ or $-$ of its associated crossing in $D$. The graph $G_D$ is called the Seifert graph associated to $D$.

\vspace{0.2cm}

\begin{figure}[t]
\centering
\includegraphics[width = 13cm]{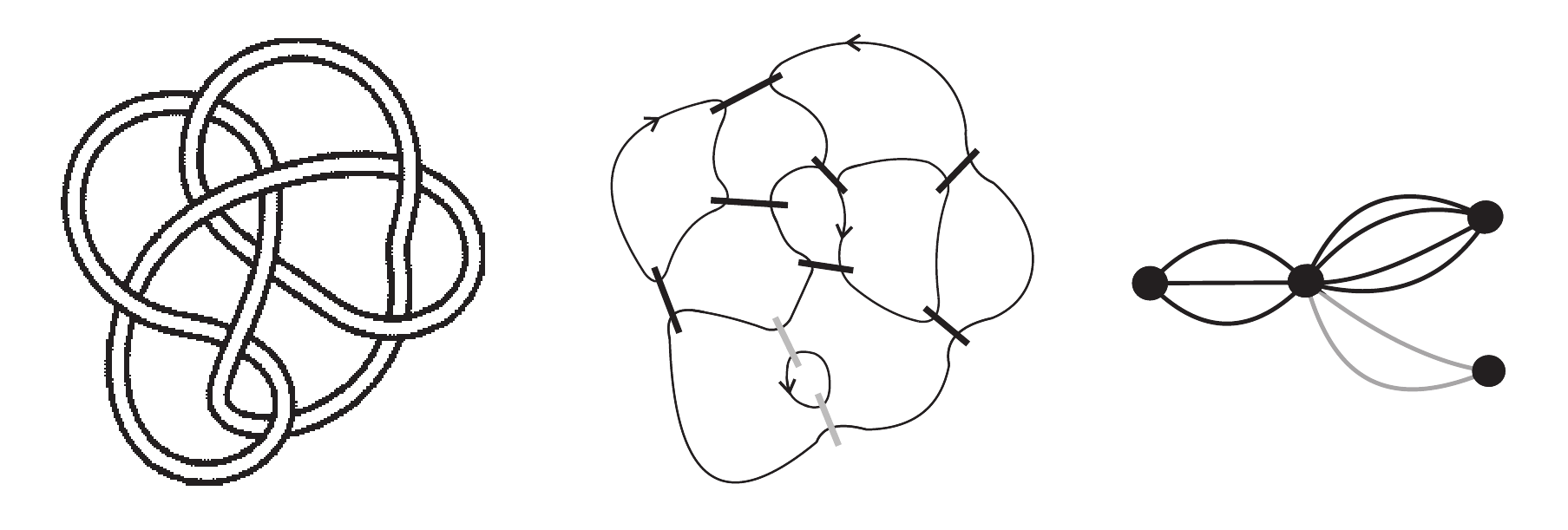}
\caption{\small{A diagram of the knot $9_{43}$ (taken from \cite{knotatlas}), the diagram obtained when applying Seifert's algorithm and its associated Seifert graph. Dark and light colors represent positive and negative signs, respectively. $9_{43}$ is homogeneous, as the diagram in the figure is so.}}
\label{943}
\end{figure}

Given a connected graph $G$, a vertex $v$ is a cut vertex if $G \backslash \{v\}$ is disconnected. Cutting $G$ at all its cut vertices produces a set of connected subgraphs containing no cut vertices, each of which is called a block of the graph. A Seifert graph is homogeneous if all the edges of a block have the same sign, for all blocks in the graph.

\begin{Def}
An oriented diagram $D$ is homogeneous if its associated Seifert graph $G_{D}$ is homogeneous. A link is homogeneous if it has a homogeneous diagram.
\end{Def}

Homogeneous links were introduced by Peter Cromwell in \cite{CromwellHom}; a more general class is the class of pseudoalternating links, introduced by E.J. Mayland and K. Murasugi in \cite{Pseudoalternantes}, which we define next.

\vspace{0.2cm}

Primitive flat surfaces are those projection surfaces arising from positive or negative diagrams that has not nested Seifert circles. A generalized flat surface is an orientable surface obtained by a finite iteration of Stallings plumbings, using primitive flat surfaces as the bricks of the construction (see Figure \ref{dibujopegadopseudoalt}). The Stallings plumbing or Murasugi sum will be defined in Section 6.

%Let $S \subset S^3$ be an orientable surface; it is said to be a primitive flat surface if it consists of a finite number of discs contained in a plane $P$, connected by a finite number of pairwise disjoint bands twisted only once in the same direction, such that the projection of $S$ on $P$ admits a graph contained in $P$ as a deformation retract, where each disc retracts to a vertex and each band retracts to an edge (see Figure \ref{pseudoalt}).
%
%\vspace{0.2cm}
%
%Let $S_1$ and $S_2$ be two primitive flat surfaces, $D_1$ and $D_2$ two discs from $S_1$ and $S_2$ respectively; identify them in such a way that the resulting orientable surface $S = S_1 \bigcup S_2$ spans a link and that there exists a sphere separating $S_1$ and $S_2$, whose intersection with both surfaces is $D_1 = D_2$. Any surface obtained by a finite iteration of this construction is a generalized flat surface. (In Section \ref{seccionplumbing} we will see that this is a particular case of Stallings plumbing.)

\begin{Def}
A link is said to be pseudoalternating if it is the boundary of a generalized flat surface.
\end{Def}

Since the projection surface constructed from any homogeneous diagram is a generalized flat surface, it follows that every homogeneous link is pseudoalternating. The converse is a consequence of a conjecture by Kauffman \cite{LibroKauffman}; Mayland and Murasugi posed a similar question in \cite{Pseudoalternantes}.

\vspace{0.2cm}

\section{Artin-homogeneous and BKL-homogeneous braids}

A braid can be represented by a braid word, using the generators of the standard presentation of the $n$ strands braid group $B_n$ given by Artin in \cite{Artin1}, \cite{Artin2}; the generator $\sigma_i$ represents a crossing involving strands in positions $i$ and $i+1$.
$$
B_n = \left< \sigma_1, \sigma_2, ... , \sigma_{n-1} \left| \begin{array}{cccc}
                                                            \sigma_i\sigma_j\sigma_i = \sigma_j\sigma_i\sigma_j & & &  |i-j| = 1 \\
                                                            \sigma_i\sigma_j = \sigma_j\sigma_i & & &  |i-j| > 1
                                                            \end{array}
                                                    \right.
\right>
$$

\begin{figure}[t] \label{dibujopegadopseudoalt}
\centering
\includegraphics[width = 12cm]{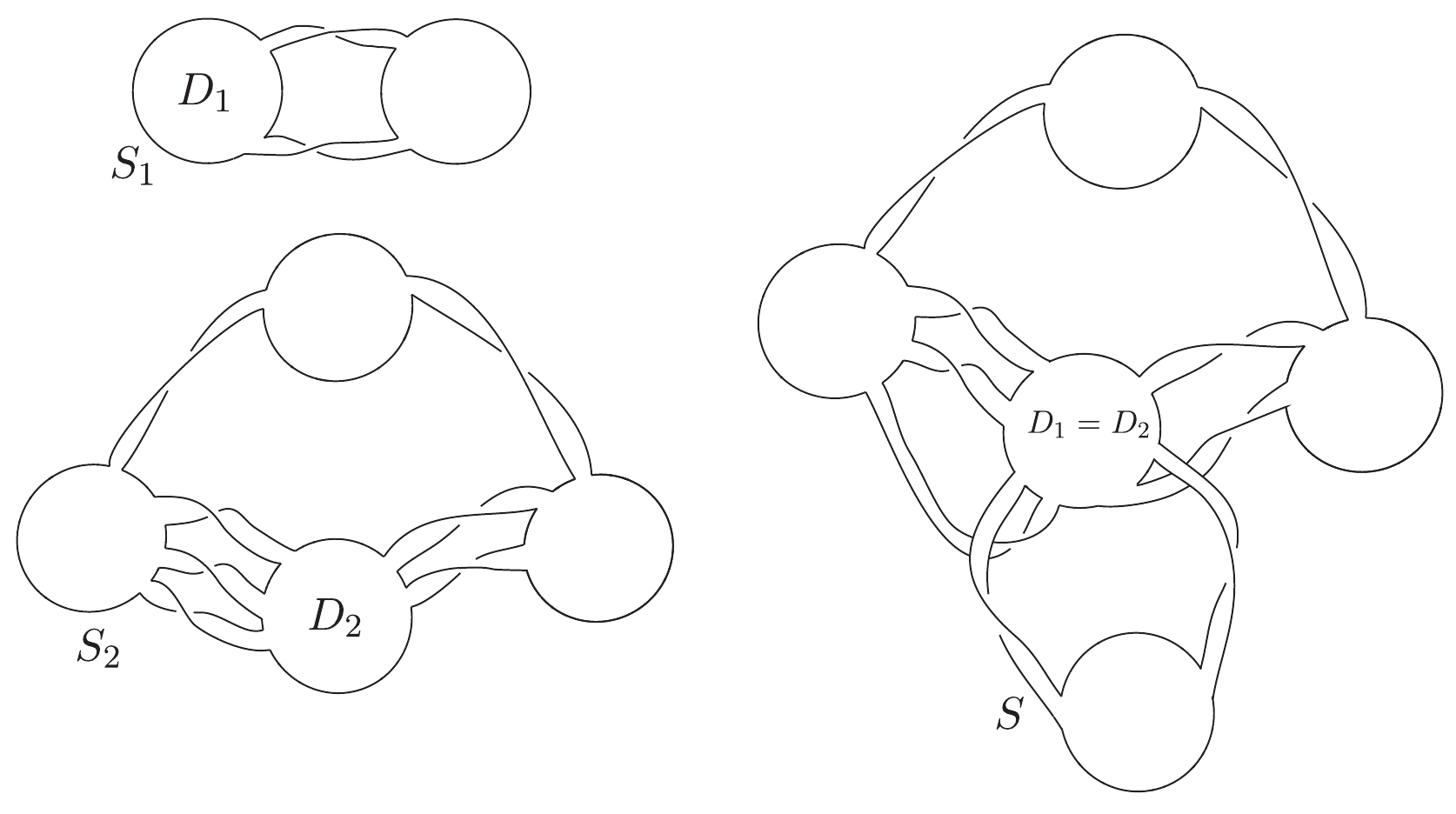}
\caption{\small{$S_1$ and $S_2$ are primitive flat Seifert surfaces. By an identification of discs $D_1$ and $D_2$, the generalized flat surface $S$ is obtained. The link spanned by $S$ is a pseudoalternating link.}}
\label{pseudoalt}
\end{figure}

\begin{Def}\label{artinbraids}
A braid word is said to be homogeneous if for each $i$, the exponents of all occurrences of $\sigma_i$ have all the same sign. A braid is homogeneous if it can be represented by a homogeneous word.
\end{Def}

\vspace{0.2cm}

Since we are going to work with two different presentations of the braid group $B_n$ in this paper, braid words written with the classical Artin generators will be called Artin-words. Thus, homogeneous words and homogeneous braids in Definition \ref{artinbraids} become homogeneous Artin-words and Artin-homogeneous braids respectively. Closures of Artin-homogeneous braids are Artin-homogeneous links.

\vspace{0.2cm}

Peter Cromwell stated in \cite{CromwellHom} that there are homogeneous links which cannot be presented as Artin-homogeneous braids, without giving a proof; here, we give a proof of this result by finding an example of these links.

\begin{Prop}\label{homnobraidhom}
There are homogeneous links which are not closure of any Artin-homogeneous braid.
\end{Prop}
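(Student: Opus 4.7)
The plan is to use fiberedness as the invariant that distinguishes closures of Artin-homogeneous braids from arbitrary homogeneous links. Stallings proved in \cite{Stallings} that the closure of any Artin-homogeneous braid is a fibered link, with the Seifert surface produced by Seifert's algorithm on the braid serving as a fiber; in particular, its Alexander polynomial must be monic. It therefore suffices to exhibit a homogeneous link whose Alexander polynomial is not monic.

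A natural candidate is the twist knot $5_2$. Being alternating, it admits a homogeneous diagram, since every alternating diagram has a homogeneous Seifert graph (an observation already made by Cromwell in \cite{CromwellHom}); on the other hand, its Alexander polynomial $\Delta_{5_2}(t) = 2 - 3t + 2t^2$ has leading coefficient $2$, so $5_2$ is not fibered and consequently cannot be realized as the closure of any Artin-homogeneous braid.

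Accordingly my proof would consist of three short steps: first, quote Stallings's fibration theorem for Artin-homogeneous closures and derive the monicity of the Alexander polynomial as its standard corollary; second, display the standard $5$-crossing alternating diagram of $5_2$, apply Seifert's algorithm, and verify block by block that the resulting Seifert graph is homogeneous, so that $5_2$ is indeed a homogeneous knot; third, record $\Delta_{5_2}(t)$ and observe that its leading coefficient $2$ contradicts fiberedness, completing the contradiction. The only mildly subtle point is choosing the right invariant to separate the two classes of links; once Stallings's theorem is invoked, no delicate braid-theoretic calculation is needed and the argument reduces to inspection of a small diagram together with the computation of a well-known polynomial invariant.
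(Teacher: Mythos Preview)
Your argument is correct, and you even hit on the same example $5_2$ that the paper uses, but the paper's proof follows a different route. Rather than invoking Stallings's fibration theorem, the paper gives a direct combinatorial argument: assume $5_2 = \widehat{\gamma}$ for an Artin-homogeneous word $w$ of minimal length; since homogeneous projection surfaces realise the genus \cite{CromwellHom} and $g(5_2)=1$, the braid must satisfy $c = s+1$; as the crossing number of $5_2$ is $5$, one has $s \geq 4$, so some $\sigma_i$ occurs exactly once, giving a nugatory crossing and contradicting minimality of $w$ (using primeness of $5_2$). Your approach is more conceptual and immediately shows that \emph{every} non-fibered homogeneous link (e.g.\ any twist knot other than $3_1$ and $4_1$) is an example, at the price of importing Stallings's theorem and the monicity criterion. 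The paper's argument is more self-contained --- it stays within the genus and Seifert-graph framework already set up --- but is specific to this one knot.
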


\begin{proof} The knot $5_2$ is positive, hence homogeneous; it is for example the closure of the braid $\beta = \sigma_2^{-3}\sigma_1^{-1}\sigma_2\sigma_1^{-1}$, which is not a homogeneous Artin-word. Suppose that $5_2$ is the closure of a braid $\gamma$ represented by a homogeneous Artin-word $w$. We take $w$ of minimal length among all homogeneous Artin-words whose associated closed braid is $5_2$. Let $D$ be the associated homogeneous diagram. As projection surfaces constructed from homogeneous diagrams have minimal genus \cite{CromwellHom}, $g(S_D) = g(5_2) =1$ leads to $s + 1 = c$, where $s$ and $c$ are the number of Seifert discs and bands in $S_D$. Notice that $s$ is the number of strands and $c$ is the number of crossings of $\gamma$.

Since $c \geq 5$, $\gamma$ must have at least 4 strands, and then some generator $\sigma_i$ must appear at most once. All generators must appear, since $5_2$ is a knot (a one-component link), so there exists one generator appearing exactly once, and this is a nugatory crossing. This is a contradiction with the minimality of $w$ since $5_2$ is prime.
%Removing this crossing from $\gamma$, we obtain a split braid closing to a 2-component link. As $5_2$ is prime, one of these components must be trivial, and the other one closes to $5_2$.
\end{proof}

\vspace{0.2cm}

The braid group $B_n$ admits another well known presentation due to Birman, Ko and Lee \cite{CromwellHom}; $\sigma_{ij}$ means strands $i$ and $j$ cross passing in front of the other strands. That is, $\sigma_{ij}= (\sigma_{j-2} \ldots \sigma_i)^{-1} \sigma_{j-1} (\sigma_{j-2} \ldots \sigma_{i})  $, with $i < j$:

$$
B_n = \left< \sigma_{rs}, \, 1 \leq r < s \leq n \left| \begin{array}{cccc}
                                                            \sigma_{st}\sigma_{qr} = \sigma_{qr}\sigma_{st} & & & (t-r)(t-q)(s-r)(s-q) > 0 \\
                                                            \sigma_{st}\sigma_{rs} = \sigma_{rt}\sigma_{st} = \sigma_{rs}\sigma_{rt} & & & 1 \leq r < s < t \leq n
                                                            \end{array}
                                                    \right.
\right>
$$

A braid word in terms of Birman-Ko-Lee generators will be called a BKL-word. In a positive (negative) BKL-word only positive (negative) exponents occur. A homogeneous BKL-word is a BKL-word in which for each $i, j$, the exponents of all occurrences of $\sigma_{ij}$ have all the same sign.

\begin{figure}[t]
\centering
\includegraphics[width = 9cm]{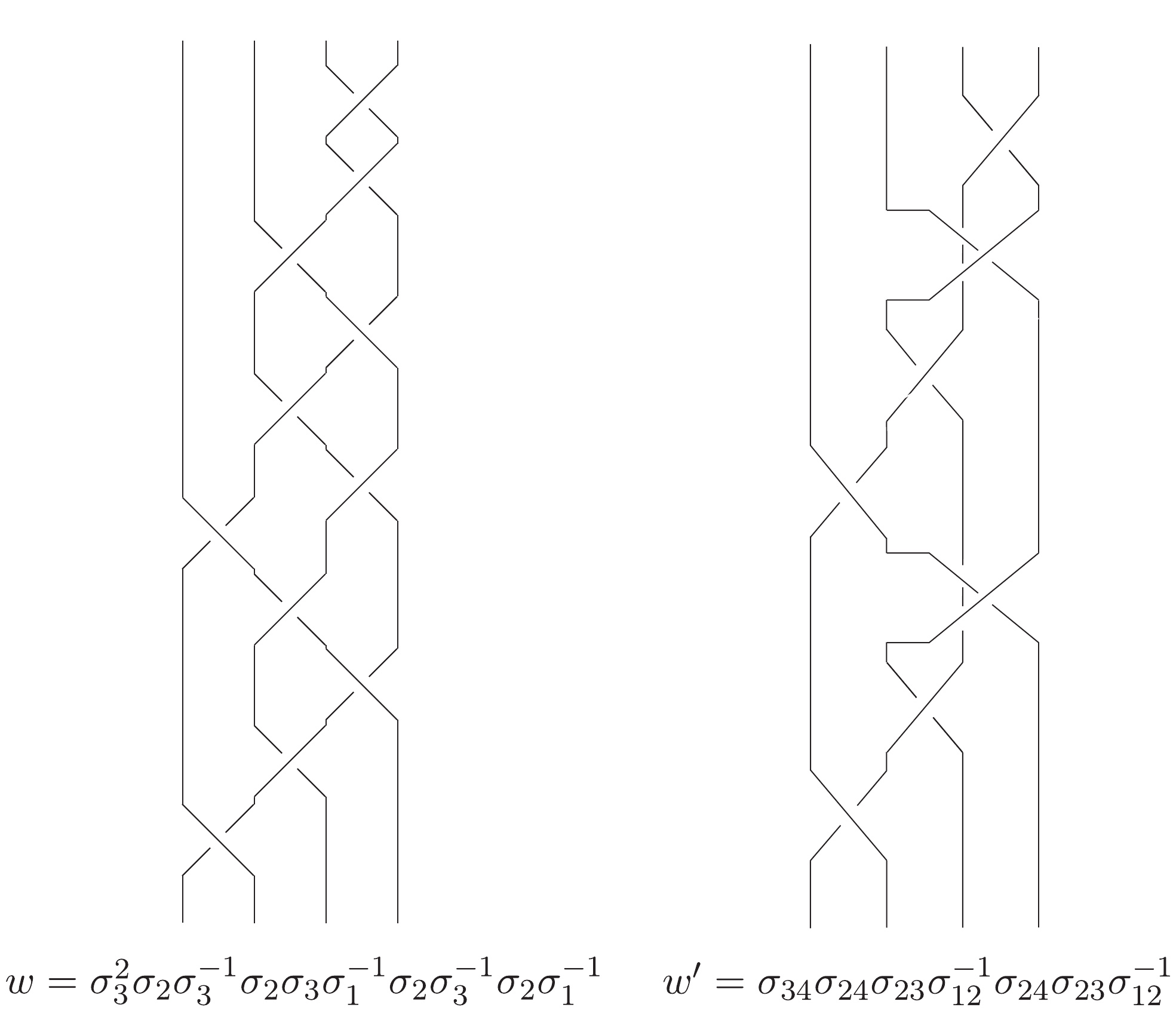}
\caption{\small{$w$ and $w'$ are two equivalent words expressing a same braid, $\beta$. As $w'$ is a homogeneous BKL-word, $\beta$ is a BKL-homogeneous braid. Knot $9_{48}$ can be expressed as the closure of $\beta$, thus $9_{48}$ is a BKL-homogeneous link.}}
\label{948}
\end{figure}

\begin{Def}
A BKL-positive (negative) braid is a braid which can be expressed by a positive (negative) BKL-word. If a link is the closure of a BKL-positive (negative) braid, we call it BKL-positive (negative) link.
\end{Def}

Lee Rudolph proved in \cite{Rudolph2} that positive links are BKL-positive links. Taking the mirror image of the link, it follows that negative links are BKL-negative links. The converse is not true, however a theorem by Baader \cite{Baader} states that a knot is positive if and only if it is homogeneous and BKL-positive.

\begin{Def}
A braid is BKL-homogeneous if it can be represented by a homogeneous BKL-word. The closure of a BKL-homogeneous braid is a BKL-homogeneous link.
\end{Def}

Note that BKL-positive and BKL-negative links are BKL-homogeneous. We will prove that every homogeneous link is a BKL-homogeneous link. The converse is not true:

\begin{Prop}\label{BKLnobraidhom}
There are BKL-homogeneous links which are not homogeneous.
\end{Prop}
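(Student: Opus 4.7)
The plan is to take the knot $9_{48}$ as the witness, as already suggested by Figure \ref{948}. I need to establish two things: that $9_{48}$ is BKL-homogeneous, and that $9_{48}$ admits no homogeneous diagram.

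The BKL-homogeneous half is essentially already done in Figure \ref{948}: the braid word $w$ there closes to $9_{48}$, and one checks, using the defining relations of the Birman-Ko-Lee presentation, that $w$ is equivalent to the word $w'$ exhibited in the figure. Since $w'$ uses each generator $\sigma_{rs}$ with a single sign, it is a homogeneous BKL-word; thus the braid $\beta$ it represents is BKL-homogeneous and its closure $9_{48}$ is a BKL-homogeneous link. So the first half reduces to verifying the BKL-relations used to pass from $w$ to $w'$.

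For the remaining direction I would show that $9_{48}$ admits no homogeneous diagram. The most economical route is to cite Cromwell's classification of homogeneous knots up to nine crossings in \cite{CromwellHom}, where $9_{48}$ is absent from the list. If one prefers a direct obstruction, one can argue invariantly as follows. Any hypothetical homogeneous diagram $D$ of $9_{48}$ would, by Cromwell's theorem, realize the Seifert genus, so the Seifert graph would have $c(D) - s(D) + 1 = 2g(9_{48}) = 4$. Combining this with further constraints on the HOMFLY polynomial of a homogeneous link (whose maximum $z$-degree equals $c(D) - s(D)$ in any homogeneous diagram) and with the known HOMFLY polynomial of $9_{48}$ from the tables, one rules out the existence of such a diagram.

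The main obstacle is precisely this negative half: producing a BKL-homogeneous presentation is a one-time verification once the correct word is known, whereas showing that no diagram of $9_{48}$ is homogeneous is a global statement that cannot be settled by inspection of a single diagram. Following the style set by Proposition \ref{homnobraidhom}, my plan is to rely on the existing classification (rather than rebuild the invariant obstruction from scratch), which keeps the proof of this proposition as short as its counterpart for Artin-homogeneity.
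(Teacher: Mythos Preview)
Your proposal is correct and follows exactly the paper's own argument: exhibit $9_{48}$ as the closure of a braid admitting the homogeneous BKL-word $w'$ from Figure~\ref{948}, and cite \cite{CromwellHom} for the fact that $9_{48}$ is not homogeneous. The optional HOMFLY-based obstruction you sketch is not in the paper and is left too vague to stand on its own, but since you explicitly opt for the citation route this does not affect the validity of the proof.
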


\begin{proof} On one hand the knot $9_{48}$ (Figure \ref{948}) is not homogeneous \cite{CromwellHom}, and on the other hand it is the closure of the braid $\gamma = [w]$, where $w = \sigma_3^2\sigma_2\sigma_3^{-1}\sigma_2\sigma_3\sigma_1^{-1}\sigma_2\sigma_3^{-1}\sigma_2\sigma_1^{-1}$; since $\sigma_{i i+2} = \sigma_{i+1}\sigma_i\sigma_{i+1}^{-1}$, then $\gamma = [w']$, where $w' = \sigma_{34}\sigma_{24}\sigma_{23}\sigma_{12}^{-1}\sigma_{24}\sigma_{23}\sigma_{12}^{-1}$ is a homogeneous BKL-word.
\end{proof}

\section{Stars and patches}\label{seccionestrellas}

Throughout this section, every surface will be assumed to be compact and oriented. A Seifert surface is in addition connected and its boundary is non-empty. In a handle decomposition of a surface $S$

$$
S= \left( \bigcup_{x \in X} d_x \right) \, \, \bigcup \, \,  \left( \bigcup_{z \in Z} b_z \right),
$$

\noindent we refer to the (disjoint) 0-handles $d_x$ as discs, and to the (disjoint) 1-handles $b_z$ as bands. Write $S^d = \bigcup_{x \in X} d_x$ and $S^b = \bigcup_{z \in Z} b_z$. The attaching regions are the connected components of $\partial S^d \bigcap \partial S^b = S^d \bigcap S^b$.

\vspace{0.2cm}

Let us define $\#_d (S) := \mbox{card } (X)$ and $\#_b (S) := \mbox{card } (Z)$; in other words, given a surface decomposed as above $\#_d$ counts its number of discs and $\#_b$ its number of bands.

\vspace{0.2cm}

Given a point $p \in S$, write $d_p$ or $b_p$ for the unique disc or band containing $p$; $p$ is in an attaching region if and only if $p$ is contained in a disc and a band.

\begin{Def}
Let $S$ be a surface. An n-star $\varphi \subset S$ consists of an interior point $c$ of $S$, called the center of $\varphi$, together with $n$ arcs (called rays) $\tau_1, \ldots, \tau_n$ pairwise disjoint except at $c$, each one going from $c$, along the interior of $S$, to a point in $\partial S$. The final point of $\tau_i$ will be denoted $tip(\tau_i)$.

A regular neighborhood $N_s(\varphi)$ of an n-star $\varphi \subset S$ is called an n-patch.
\end{Def}

An n-patch can be thought as a polygon with 2n edges, which are alternately boundary arcs and proper arcs in the surface.

\vspace{0.2cm}

Let $S$ be a surface with a handle decomposition as above; we say that an n-star $\varphi \subset S$ is transverse to the decomposition of $S$ if the center of the n-star lies in $\mbox{Int } S^d$ and each ray $\tau_i$ is transverse to $S^d \bigcap S^b$, with $tip(\tau_i) \in \partial S^d \backslash S^b$ (that is, $tip(\tau_i)$ belongs to the boundary of a disc, but not to the attaching regions).

\vspace{0.2cm}

Let $\varphi$ be a transverse n-star; from now on, ``arc'' will mean a connected component of either $\varphi \bigcap S^d$ or $\varphi \bigcap S^b$, that is, the arcs obtained by cutting the n-star using the attaching regions as blades (the arc containing the center of the star is not properly an arc); for each ray $\tau \subset \varphi$, write $\delta_b(\tau)$ for the number of arcs contained in $\tau$ which lie on bands of the surface, that is, $\delta_b(\tau)$ is the number of connected components of $\tau \bigcap S^b$. Then $\delta_b(\varphi) : = \displaystyle \sum_{\tau \subset \varphi} \delta_b(\tau)$ counts the number of times that the n-star crosses the bands. If $\delta_b(\tau) > 0$ we say that $\tau$ is a long ray.
An n-star $\varphi \subset S$ is minimal with respect to the decomposition of the surface if $\delta_b(\varphi) \leq \delta_b(\varphi')$ for every n-star $\varphi' \subset S$ transverse to the decomposition and ambient isotopic to $\varphi$ in $S$.

\vspace{0.2cm}

Let $\tau \subset \varphi$ be a long ray; we denote $tail(\tau)$ the unique arc containing $tip(\tau)$, which lies on a disc, $d_{tip(\tau)}$. The other endpoint of the tail is the coccyx of $\tau$, which is on an attaching region. The arc $tail(\tau)$ divides $d_{tip(\tau)}$ in two discs; if at least one of them is disjoint with the bands in $S$, except for $b_{coccyx(\tau)}$, the ray $\tau$ is called loose. We say that $\tau$ is slack if it contains an arc with both endpoints on the same attaching region.

\vspace{0.2cm}

\begin{Lema}[Rudolph \cite{Rudolph1}]
If $\tau$ is a ray of a minimal n-star, then $\tau$ is neither slack nor loose.
\end{Lema}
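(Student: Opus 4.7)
The plan is to argue by contradiction: assume $\varphi$ is minimal and $\tau$ is slack or loose, then produce an ambient isotopy of $S$ yielding an $n$-star $\varphi'$ ambient isotopic to $\varphi$ and transverse to the handle decomposition, with $\delta_b(\varphi')<\delta_b(\varphi)$.

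For the \emph{slack} case, I would first choose, among all arcs of $\varphi$ having both endpoints on a single attaching region, an innermost one $\alpha$; this guarantees that $\alpha$ together with a subarc of its attaching region $r$ bounds an embedded subdisc $\Delta$ (inside the band or disc containing $\alpha$) whose interior meets no other arc of the star. I would then push $\alpha$ across $\Delta$ through $r$. If $\alpha$ originally lay in a band $b_z$, this isotopy moves $\alpha$ off $b_z$ and into the adjacent disc, so one component of $\tau\cap S^b$ disappears and $\delta_b$ drops by one. If $\alpha$ lay in a disc, then the two arcs of $\varphi$ flanking $\alpha$ in $\tau$ must both lie in the band attached at $r$; after the push, those two band-arcs and (the image of) $\alpha$ amalgamate into a single band-arc, so the number of components of $\tau\cap S^b$ drops from two to one, and again $\delta_b$ strictly decreases.

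For the \emph{loose} case, the loose hypothesis furnishes a subdisc $D_1\subset d_{tip(\tau)}$ bounded by $tail(\tau)$, a subarc of $\partial S$ containing $tip(\tau)$, and a subarc of the attaching region incident to $coccyx(\tau)$, with $D_1$ meeting no band of $S$ other than $b_{coccyx(\tau)}$. I would isotope $\varphi$ inside a regular neighbourhood of $D_1\cup b_{coccyx(\tau)}$, sliding $tip(\tau)$ along $\partial S$, across $D_1$, through a thin strip of $b_{coccyx(\tau)}$, and out onto the disc $d'$ at the opposite end of $b_{coccyx(\tau)}$. Under this isotopy $tail(\tau)$ and the arc of $\tau$ lying on $b_{coccyx(\tau)}$ are swept into a single arc inside $d'$, killing at least one band-component of $\tau$.

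The main obstacle I expect is the slack case: I must ensure the bigon $\Delta$ is really disjoint from the rest of $\varphi$, but \emph{a priori} other arcs of the star (including subarcs of $\tau$ itself) could traverse $\Delta$, and the center of the star could even lie inside it. This is handled by the innermost/outermost choice of $\alpha$, together with a standard cut-and-redo argument that replaces any intruding subarc of $\varphi$ by an isotopic one disjoint from $\Delta$, as in Rudolph's original argument. Once $\Delta$ (respectively $D_1\cup b_{coccyx(\tau)}$) is located, the isotopy is supported in a small neighbourhood of it and obviously carries $\varphi$ to a transverse $n$-star, so the reduction in $\delta_b$ contradicts minimality.
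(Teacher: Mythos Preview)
Your overall approach---contradiction via an innermost choice and an isotopy reducing $\delta_b$---is exactly the paper's. The one real discrepancy is that you have located the delicate point in the wrong case.

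In the slack case the innermost bigon $\Delta$ is automatically free of other arcs of $\varphi$: any arc of the star lying in $\Delta$ would have both endpoints on the same attaching region $r$ and hence be a slack arc strictly innermore than $\alpha$. So no ``cut-and-redo'' is needed there; the push across $r$ is immediate, just as the paper states in one line.

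The genuine obstacle is in the \emph{loose} case, which you treat as unproblematic. The subdisc $D_1\subset d_{tip(\tau)}$ can contain tails of \emph{other} rays: any ray whose tip lands on the boundary arc $\partial S\cap\partial D_1$ must have its coccyx on the same attaching region as $\tau$ (since $D_1$ meets no band but $b_{coccyx(\tau)}$), so it is itself loose. These foreign tips sit on $\partial S$ between $tip(\tau)$ and the attaching region and block the slide of $tip(\tau)$ along $\partial S$ that you describe. The paper handles this by first invoking the slack case (so no non-tail arcs lie in $D_1$) and then replacing $\tau$ by the \emph{innermost} such loose ray $\tau'$; its region $D_1'$ contains no other tails, and the slide of $tip(\tau')$ back through $b_{coccyx(\tau')}=b_{coccyx(\tau)}$ goes through cleanly, giving $\delta_b(\varphi')=\delta_b(\varphi)-1$.

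In short, your proof becomes correct once you transplant the innermost trick from the slack paragraph (where it is not really needed) to the loose paragraph (where it is).
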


\begin{proof}
Let $\tau$ be a slack ray; it has a slack arc related to an attaching region. The innermost slack arc associated to this attaching region (that is, the one with no arcs in the disc bounded by itself and the segment joining its endpoints), can be removed by pushing it to the other side of the attaching region. The resulting n-star $\varphi'$ is ambient isotopic to the original one, but $\delta_b(\varphi') < \delta_b(\varphi)$. Hence $\varphi$ is not minimal; a contradiction.

Suppose $\tau$ is not slack. If $\tau$ were loose, there could exist other rays whose coccyx and tips are in the segment joining $coccyx(\tau)$ and $tip(\tau)$; as $\varphi$ cannot intersect itself, these rays must have their coccyx in the same attaching region as $\tau$, so they are loose too. The innermost one, $\tau'$, can be removed, by performing an ambient isotopy pulling $tip(\tau')$ along the boundary of the surface, crossing back the last band the ray had crossed, $b_{coccyx(\tau')} = b_{coccyx(\tau)}$, and obtaining an n-star $\varphi'$ with $\delta_b(\varphi') = \delta_b(\varphi)-1$, yielding again a contradiction.
\end{proof}

\section{Braided surfaces}\label{seccionbraidedsurfaces}

\begin{figure}[t]
\centering
\includegraphics[width = 14cm]{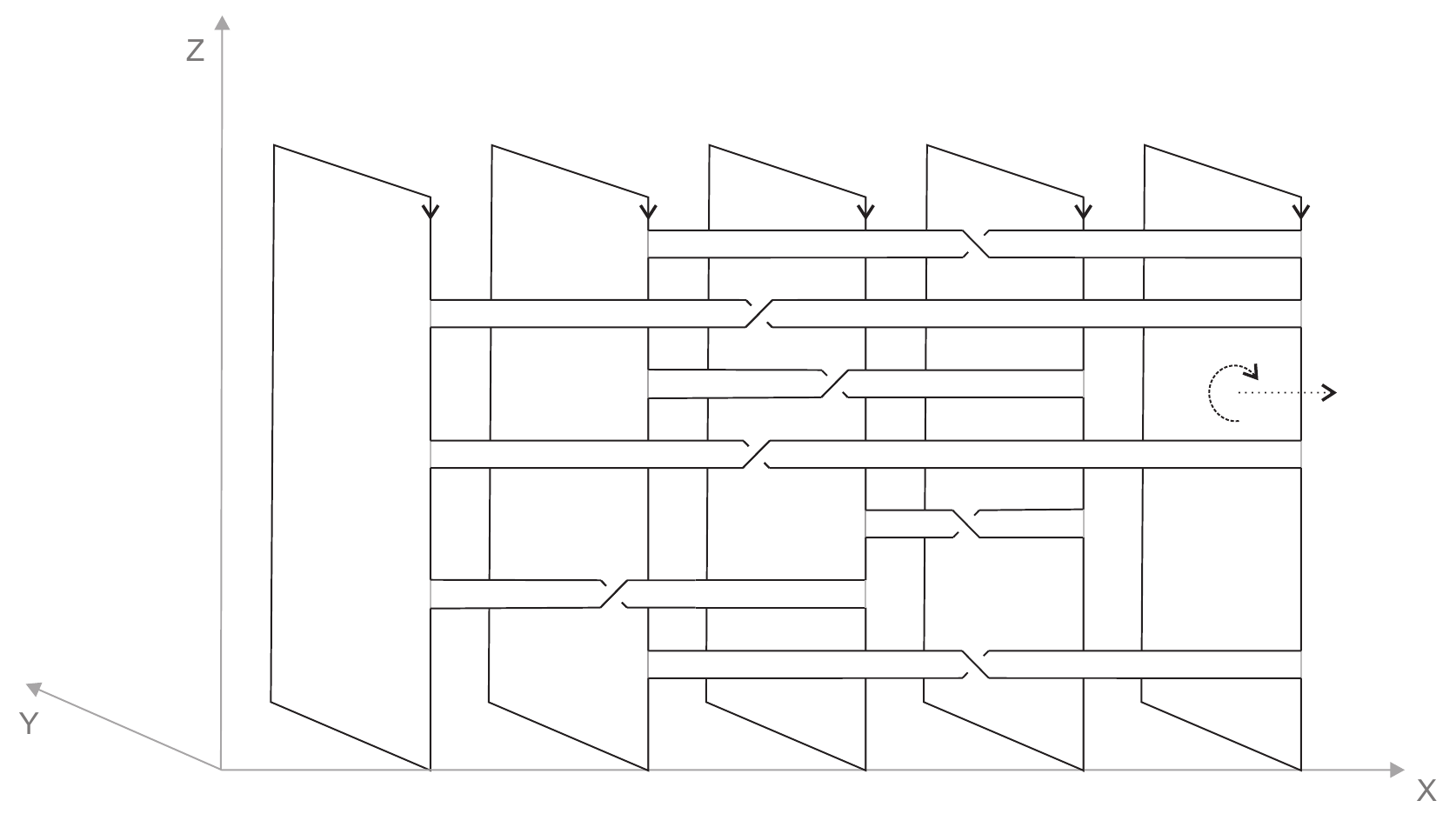}
\caption{\small{Braided surface $S = S(w)$, with $w = \sigma_{25}^{-1}\sigma_{15}\sigma_{24}\sigma_{15}\sigma_{34}^{-1}\sigma_{13}\sigma_{25}^{-1}$. In this example, $L(z_1) = 2$, $R(z_1) = 5$, $e(z_1) = -$, $L(z_6) = 1$, $R(z_6) = 3$ and $e(z_6) = +$.}}
\label{braidedsurface}
\end{figure}

We will say a Seifert surface is braided (see Figure \ref{braidedsurface}) if it has a handle-decomposition
$$
S= \left( \bigcup_{x \in  X} d_x \right) \, \, \bigcup \, \,  \left( \bigcup_{z \in Z } b_z \right)
$$
where:

\vspace{0.2cm}

\noindent $\bullet$ $X = \{1,2,...,n\}$ for a certain $n$, and if $x \in X$ then $d_x:= \{x\} \times [0,1] \times [0,k]$ is a $(1 \times k)$-rectangle parallel to the plane $YZ$ at a distance $x$ from it. Orient each $d_x$ such that its normal vector has the same orientation as the $X$-axis.

\vspace{0.2cm}

\noindent $\bullet$ $Z \subset \mathbb{R}$ is a finite set, and for each $z \in Z$, $b_z$ is a band joining two different discs, $d_{x_0}$ and $d_{x_1}$, with $x_0 < x_1$; let $P = (x,y,z') \in b_z$, then $x_0 \leq x \leq x_1$, $y\leq 0$ and $z' \in [z - \varepsilon, z + \varepsilon]$, $\varepsilon > 0$. Moreover, the intersection of $b_z$ with each plane $x = t \in [x_0, x_1]$ is a segment of length $2\varepsilon$ whose center is a point of height $z$.

\vspace{0.2cm}

\noindent $\bullet$ The attaching regions are those segments where $b_z$ intersects the plane $y = 0$; they are given by $\{x_0\} \times \{0\} \times [z - \varepsilon, z + \varepsilon]$ and $\{x_1\} \times \{0\} \times [z - \varepsilon, z + \varepsilon]$. When $t$ travels from $x_0$ to $x_1$, the segment $b_z \bigcap \{x = t\}$ makes a half twist, which can be positive or negative, as can be seen in Figure \ref{braidedsurface}.

\vspace{0.2cm}

We define maps $L,R: Z \longrightarrow X$ by the condition that a band $b_z$ joins the discs $d_{L(z)}$ and $d_{R(z)}$ with $L(z) < R(z)$. Also, $e: Z \longrightarrow \{+,-\}$ assigns to each $z \in Z$ the sign of the half twist of the band $b_z$.

\vspace{0.2cm}

The boundary of a braided surface $S$ (with a certain handle-decomposition) is an oriented link, closure of a braid represented by the BKL-word $w_S = \sigma_{L(z_1) R(z_1)}^{e(z_1)} \cdots \sigma_{L(z_b) R(z_b)}^{e(z_b)}$, where $Z = \{z_1, \ldots, z_b\}$, with $z_1 > z_2 > \dots > z_b$. Reciprocally, given a BKL-word $w$, there is a unique (up to isotopy) braided surface $S(w)$ such that $w_{S(w)} = w$. It is important to remark that different words representing the same braid may determine non isotopic braided surfaces; as an example consider the braided surfaces $S(\sigma_{12} \sigma_{12}^{-1})$ and $S(\mathbf{1})$. However, if $S=S(w)$ and $S'=S(w')$ are two ambient isotopic braided surfaces, then $w$ and $w'$ are BKL-words representing braids whose closures are equivalent links.

\begin{Def}
A surface $S \subset S^3$ is said to be BKL-positive (BKL-negative) if it is ambient isotopic to a braided surface $S' = S(w)$, with $w$ a positive (negative) BKL-word.
\end{Def}

The boundary of a BKL-positive surface is a BKL-positive link. Let us remark that BKL-positive surfaces are those classically named quasipositive surfaces, and BKL-positive links are known as strongly quasipositive links in the literature (see for example \cite{Rudolph1}, \cite{Rudolph2}).

\begin{Def}
A BKL-homogeneous braided surface is a braided surface $S(w)$ where $w$ is a homogeneous BKL-word. A surface is BKL-homogeneous if it is ambient isotopic to a BKL-homogeneous braided surface.
An Artin-homogeneous braided surface is a braided surface $S(w)$, with $w$  a homogeneous Artin-word. A surface is Artin-homogeneous if it is ambient isotopic to an Artin-homogeneous braided surface.
\end{Def}

Clearly, BKL-positive and BKL-negative surfaces are BKL-homogeneous.

\vspace{0.2cm}

Note that a link is BKL-homogeneous if and only if it is the boundary of a BKL-homogeneous surface. Although the boundary of an Artin-homogeneous surface is a homogeneous link, there are homogeneous links which are not boundary of an Artin-homogeneous surface, as we saw in Proposition \ref{homnobraidhom}.

\vspace{0.2cm}

\begin{figure}[t]
\centering
\includegraphics[width = 8.5cm]{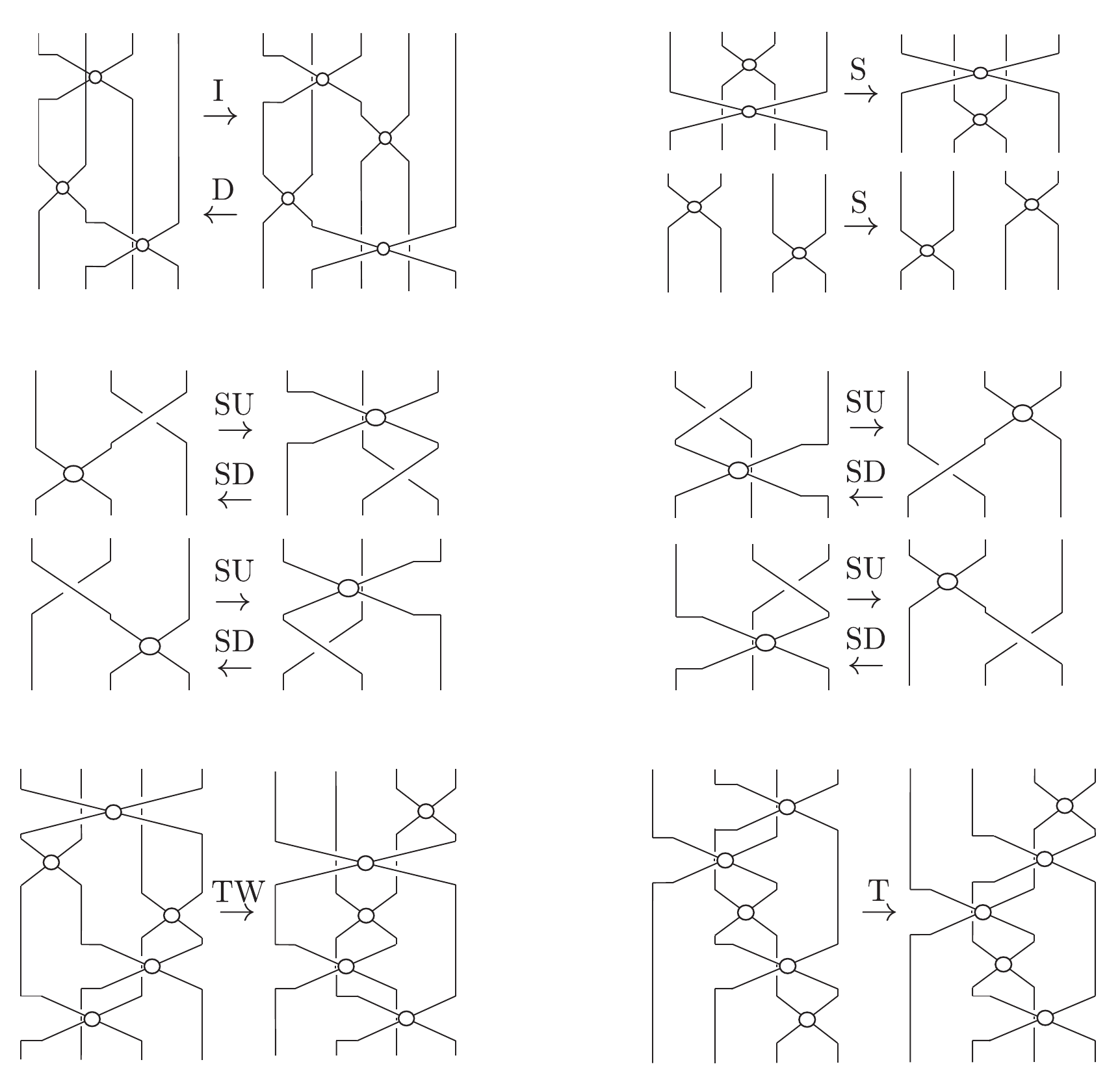}
\caption{\small{These movements are ambient isotopies between braided Seifert surfaces. A small circle represents a positive or negative crossing.}}
\label{movimientos}
\end{figure}

The following operations, defined in \cite{Rudolph1} and shown in Figure \ref{movimientos}, carry a braided surface $S=S(w)$ to an ambient isotopic one, $S'=S(w')$:

\vspace{0.2cm}

\noindent -\underline{Inflation} (I): given an strand $i$ and a sign $\varepsilon$, an inflation consists on replacing $w= uv$ with $w' = u' \sigma_{i i+1}^\varepsilon v'$, where $u'$ ($v'$) equals $u$ ($v$) after replacing each $\sigma_{jk}$ appearing in $u$ ($v$) by $\sigma_{f(j)f(k)}$, with $f(a) = a$ if $a \leq i$ and $f(a) = a+1$ if $a > i$. This operation inserts a disc and a band to $S$.  The opposite is called deflation (D). In the braid setting, these movements are natural generalizations of the stabilization/destabilization movements.

\vspace{0.2cm}

\noindent -\underline{Slip} (S): permute $\sigma_{ij}$ and $\sigma_{kl}$ if i and j do not separate k and l, or k and l do not separate i and j. In the surface, it exchanges the height of two consecutive unlinked bands. It is related to the first relation of the BKL presentation of $B_n$.

\vspace{0.2cm}

\noindent -\underline{Slide up} (SU): close to the second relation in the BKL presentation of $B_n$, when $i<j<k$ it replaces subwords $\sigma_{jk}^{+1}\sigma_{ij}^{\pm 1}$, $\sigma_{ij}^{-1}\sigma_{jk}^{\pm 1}$, $\sigma_{ij}^{+1}\sigma_{ik}^{\pm 1}$ or $\sigma_{jk}^{-1}\sigma_{ik}^{\pm 1}$ in $w$ with $\sigma_{ik}^{\pm1}\sigma_{jk}^{+1}$, $\sigma_{ik}^{\pm 1}\sigma_{ij}^{-1}$, $\sigma_{jk}^{\pm 1}\sigma_{ij}^{+1}$ or $\sigma_{ij}^{\pm 1}\sigma_{jk}^{-1}$ in $w'$, respectively. In the surface, it can be thought as sliding the lower band (in the pair of bands involved in the movement) over the other one, using one of its boundary arcs as a rail. The opposite is called slide down (SD).

\vspace{0.2cm}

\noindent -\underline{Twirl} (TW): it passes the leftmost string in the braid $\beta$, represented by $w$, to the rightmost position. The BKL-word $w'$ will be obtained from $w$ after replacing each $\sigma_{ij}$ with $\sigma_{i-1 j-1}$ if $i \neq 1$, or $\sigma_{j-1 n}$ if $i=1$, with $n$ the number of strands in $\beta$.

\vspace{0.2cm}

\noindent -\underline{Turn} (T): it sends the last BKL-generator appearing in $w$ to the first position; in the surface, the lowest band slides up behind the discs to the highest position. In the braid setting, this movement corresponds to a conjugation.

\section{Stallings plumbing and braided Stallings plumbing} \label{seccionplumbing}

Let $S \subset S^3$ be a Seifert surface; let $S^2 \subset S^3$ be a sphere separating $S^3$ into two non empty 3-balls, $B_1$ and $B_2$, such that $B_1 \bigcup B_2 = S^3$ and $B_1 \bigcap B_2 = S^2$. Let $N = S \bigcap S^2$, $S_1 = S \bigcap B_1$ and $S_2 = S \bigcap B_2$. If $S_i$ are Seifert surfaces and for some $n_i$, $i = 1,2$, $N \subset S_i$ is a $n_i$-patch in $S_i$, we say that $S^2$ deplumbs $S$ into two plumbands, and that $S$ is the Murasugi sum or Stallings plumbing of $S_1$ and $S_2$. We write $S = S_1 * S_2$.

\vspace{0.2cm}

If we try to define the operation $S = S_1 * S_2$ starting from the plumbands some additional information is needed: given two Seifert surfaces, $S_1$ and $S_2$, we need to specify the gluing $n_i$-patches $N_i \subset S_i$, $i = 1,2$, and an orientation-preserving homeomorphism $h: N_1 \longrightarrow N_2$, with $h (N_1 \bigcap \partial S_1) \bigcup (N_2 \bigcap \partial S_2) = \partial N_2$ (that is, the image under $h$ of the boundary arcs of $N_1$ in $S_1$ covers the interior arcs of $N_2$ in $S_2$). See \cite{Rudolph1} for more details.

\vspace{0.2cm}

When both $S_1 = S(w_1)$ and $S_2 = S(w_2)$ are braided surfaces, a special Stallings plumbing called braided Stallings plumbing can be performed: If $S_1$ and $S_2$ have handle decompositions as in Section \ref{seccionbraidedsurfaces}, the new surface $S = S(w) = S(w_1)*S(w_2)$ is defined as a Stallings plumbing using the first disc $d_1^2$ in $S(w_2)$ and the last disc $d_{n_1}^1$ in $S(w_1)$ as gluing discs.

\vspace{0.2cm}

The surface $S = S(w) = S_1 * S_2$ will be a braided Seifert surface with $\#_d (S) = \#_d(S_1) + \#_d(S_2) - 1$ and $\#_b (S) = \#_b(S_1) + \#_b(S_2)$; the order in the bands involving $d_{n_1}^1 = d_1^2$ will be given by a shuffling of the bands involving $d_{n_1}^1$ and $d_1^2$ in $S(w_1)$ and $S(w_2)$ respectively, keeping their internal order in the original surfaces.

\vspace{0.2cm}

\begin{figure}[t]
\centering
\includegraphics[width = 14cm]{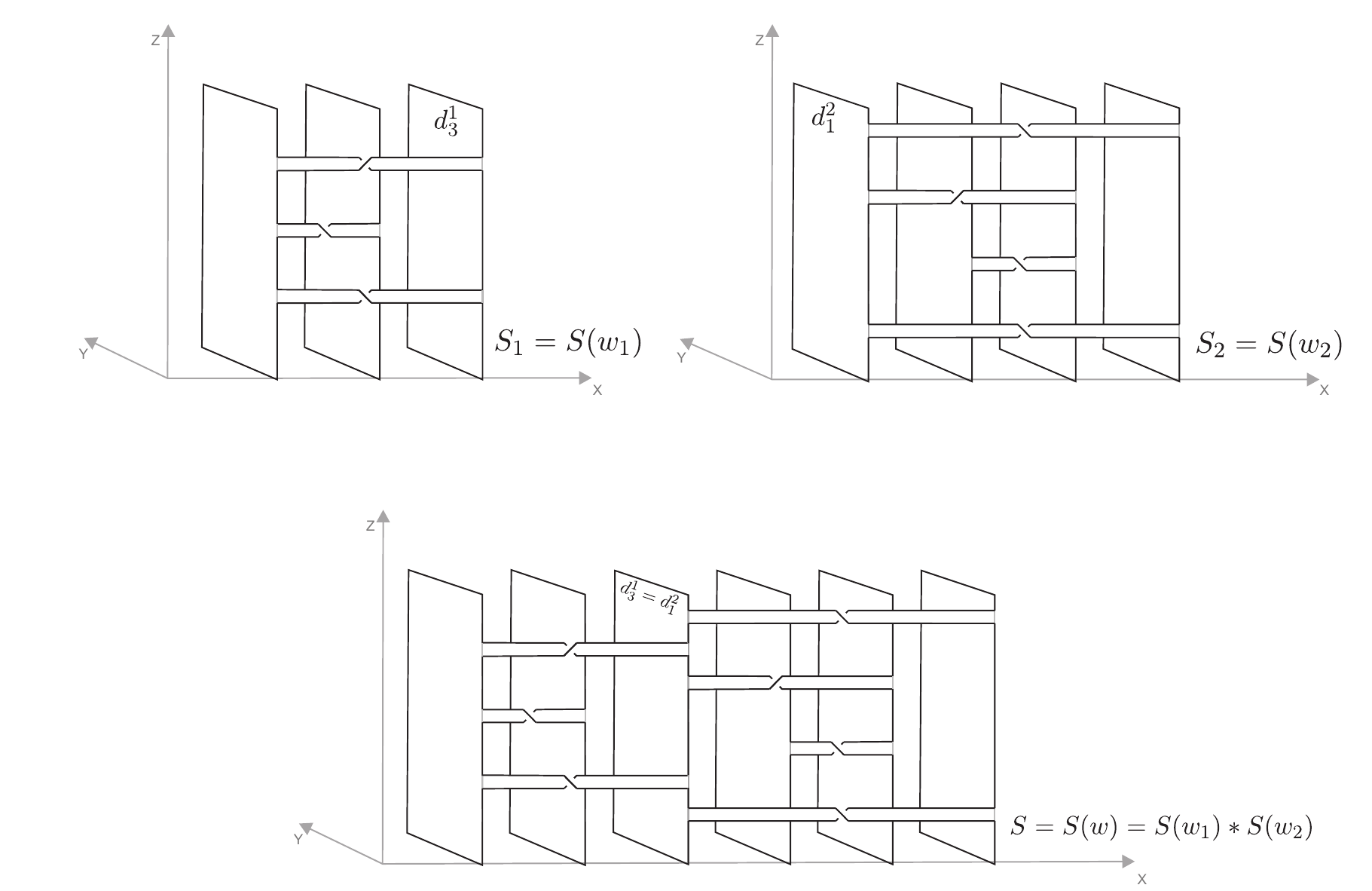}
\caption{\small{Surfaces $S_1= S(w_1)$ and $S_2 = S(w_2)$ where $w_1 = \sigma_{13}\sigma_{12}^{-1}\sigma_{13}^{-1}$ and $w_2= \sigma_{14}^{-1}\sigma_{13}\sigma_{23}^{-1}\sigma_{14}^{-1}$. After a shuffling of $w_2$ and $w_2' = \sigma_{36}^{-1}\sigma_{35}\sigma_{45}^{-1}\sigma_{36}^{-1}$, we obtain $w = \sigma_{36}^{-1}\sigma_{13}\sigma_{35}\sigma_{12}^{-1}\sigma_{45}^{-1}\sigma_{13}^{-1}\sigma_{36}^{-1}$, which represents the braid whose closure bounds $S = S(w)$, a braided Stallings plumbing of $S_1$ and $S_2$. }}
\end{figure}

Algebraically, $w$ can be thought as a resulting BKL-word when shuffling $w_1$ and $w_2'$, with $w_2'$ the BKL-word obtained after replacing each generator $\sigma_{ij}$ in $w_2$ with $\sigma_{i'j'}$, where $i' = i + n_1 - 1$ and $j' = j + n_1 - 1$, $n_1$ the number of strands in $\beta_1 = [w_1]$.

\section{BKL-homogeneity under Stallings plumbing}\label{new}

The following result was shown in \cite[Lemma 4.1.4]{Rudolph1} in the particular case of quasipositive surfaces (BKL-positive surfaces in this paper). We use the same techniques to extend the result to the case of BKL-homogeneous surfaces.

\begin{Lema} \label{reducefi}
Let $S = S(w)$ be a BKL-homogeneous braided surface and $\varphi \subset S$ a minimal braided n-star with $\delta_b(\varphi) > 0$. Then there exists a BKL-homogeneous braided surface $S' = S(w')$ and an ambient isotopy of $\mathbb{R}^3$ carrying $(S, \varphi)$ to $(S', \varphi')$, where $\varphi'$ is a minimal braided n-star with $\delta_b(\varphi') < \delta_b(\varphi)$. This construction can be done in such a way that $\#_d(S') = \#_d(S) + 1$ and $\#_b(S') = \#_b(S) + 1$.
\end{Lema}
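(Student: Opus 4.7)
The plan is to adapt Rudolph's proof of the corresponding statement for quasipositive (BKL-positive) surfaces to the BKL-homogeneous setting, using the elementary moves I, S, SU, SD, TW, T from Figure \ref{movimientos} while tracking signs carefully. Let $\tau \subset \varphi$ be a long ray; by the previous lemma $\tau$ is neither slack nor loose. Let $b_z$ be the band whose attaching region contains $coccyx(\tau)$, and let $d = d_{tip(\tau)}$ be the disc containing $tail(\tau)$. Non-looseness gives room on one side of $tail(\tau)$ in $d$ into which $tip(\tau)$ can later be pushed, and non-slackness rules out redundant sub-arcs of $\tau$ that would already contradict minimality.

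First I apply a sequence of Turn (T) and Twirl (TW) moves to bring $(S,\varphi)$ into a standard form in which $d$ corresponds to the last strand of the braid and $b_z$ corresponds to the lowest BKL-generator in $w$. These moves are ambient isotopies and act on $w$ by cyclic shifts or global relabellings of indices, so BKL-homogeneity is preserved. I then perform a single Inflation at the strand carrying $d$, inserting a new disc $d^{\text{new}}$ and a new band $b^{\text{new}}$ joining $d$ and $d^{\text{new}}$. Algebraically this inserts a factor $\sigma_{i,i+1}^{\varepsilon}$ in $w$ (with the appropriate index shift of later generators), and I choose $\varepsilon := e(z)$. Any pre-existing occurrence of $\sigma_{i,i+1}$ in $w$ has been relabelled to $\sigma_{i,i+2}$ by the shift, so this sign choice is unconstrained by $w$ and the intermediate word remains BKL-homogeneous.

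Next I modify $\varphi$ by an ambient isotopy that exploits the empty sub-disc provided by non-looseness: I push $tip(\tau)$ across the attaching region of $b_z$ into $d^{\text{new}}$, so the terminal portion of $\tau$ now threads through $b^{\text{new}}$ instead of through $b_z$. A finite sequence of Slip and Slide (SU/SD) moves then reshuffles the affected bands until the resulting braided surface has standard form $S' = S(w')$ and the deformed star is realized as a transverse $n$-star $\varphi'$ with $\delta_b(\varphi') \leq \delta_b(\varphi) - 1$; replacing $\varphi'$ by an ambient isotopic minimal one if necessary only decreases $\delta_b$ further. The disc and band counts $\#_d(S') = \#_d(S) + 1$ and $\#_b(S') = \#_b(S) + 1$ follow directly from the single inflation performed.

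The main obstacle is verifying that the final word $w'$ remains BKL-homogeneous. Slip moves commute two unlinked generators without altering any letter, so they preserve sign-homogeneity trivially. Each Slide move, in contrast, converts a pair $\sigma_{ab}^{\pm}\sigma_{cd}^{\pm}$ into a different pair of BKL-generators with prescribed signs, so I must check that every letter appearing in the new pair is either absent from the rest of $w$ or carries the same sign as its previous occurrences. The slides used to reroute $\tau$ are precisely those governed by the interplay between $b^{\text{new}}$ and $b_z$; a short case analysis on the four slide-up and slide-down patterns of Figure \ref{movimientos}, together with the choice $\varepsilon = e(z)$, shows that each letter produced either coincides with the image of $\sigma_{L(z),R(z)}^{e(z)}$ under the index shift, matches $b^{\text{new}}$ with its sign $\varepsilon$, or is one of the relabelled generators inherited unchanged from $w$. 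This yields sign-consistency and hence BKL-homogeneity of $w'$.
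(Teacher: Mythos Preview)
Your outline follows Rudolph's strategy, but there is a genuine gap centred on the role of non-looseness. You write that ``non-looseness gives room on one side of $tail(\tau)$ in $d$ into which $tip(\tau)$ can later be pushed''; this is backwards. By definition $\tau$ is loose precisely when one of the two sub-discs of $d_{tip(\tau)}$ cut out by $tail(\tau)$ meets no band other than $b_{coccyx(\tau)}$. Minimality forces $\tau$ to be \emph{non}-loose, which means that on \emph{both} sides of $tail(\tau)$ there are attaching regions of further bands. These bands --- the set $B^{\tau}_{x_0}$ in the paper's proof --- are an obstruction, not free space: they block any attempt to drag $tip(\tau)$ back along $\partial d$ past the attaching region of $b_z$.

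Consequently the heart of the argument is missing from your sketch. After the inflation one must first slip or slide \emph{every} band whose height lies between $z_0$ and $z_{tip(\tau)}$ over the new band $b^{\text{new}}$, transferring the obstructing attaching regions from $d$ to the freshly created disc; only then does $\tau$ become loose so that its tip can be pulled back. A second slide (of $b^{\text{new}}$ over the image of $b_z$) followed by a second pull is still needed to undo the extra band-crossing introduced by the inflation itself. Your single move ``push $tip(\tau)$ across the attaching region of $b_z$ into $d^{\text{new}}$'' is also geometrically incoherent, since $b_z$ does not connect $d$ to $d^{\text{new}}$. Finally, the homogeneity check is not the generic four-case slide analysis you describe: it rests on two specific facts --- that the inflated disc is fresh (so the slid generators $\sigma_{x_0+1,\,j}^{\pm}$ cannot clash with anything already in $w$), and that any band of $B^{\tau}_{x_0}$ joining $d_{x_0}$ to $d_{x_1}$ already carries the sign $e(z)$ by homogeneity of $w$. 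These are exactly the places where the BKL-homogeneous case differs from the quasipositive one, and your proposal does not isolate them.
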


\begin{proof}

Since $\delta_b(\varphi) > 0$, the n-star $\varphi$ has a long ray $\tau \not\subset S^d$. $\tau$ can be chosen so that $tail(\tau)$ is innermost (that is, the segment $\overline{coccyx(\tau) tip(\tau)}$ does not contain the endpoint $tip(\tau')$ of other ray $\tau' \subset \varphi$). Notice that there could be arcs of the star (not tails) in the region determined by $tail(\tau)$ and the segment  $\overline{coccyx(\tau) tip(\tau)}$.

Let $d_{x_0} = d_{tail(\tau)}$ and $b_{z_0} = b_{coccyx(\tau)}$. Let $z_{tip(\tau)}$ be the $z$-coordinate of $tip(\tau)$. Rotating the surface to put it upside-down if necessary, we can assume that $z_{tip(\tau)} > z_0$. By taking the mirror image if necessary (inverting the $x$-direction) we can assume that $e(b_{z_0}) = +$. And applying some twirls if necessary, we can assume that $x_0 = L(b_{z_0})$, that is, the disc attached to $d_{x_0}$ by $b_{z_0}$ is on its right.

\vspace{0.2cm}

\noindent
1) A picture of regions of $d_{x_0}$ and $d_{x_1}$, with $x_1:= R(b_{z_0})$, attached by $b_{z_0}$ is shown in Figure~\ref{dibulema} (the neighborhood of $tail(\tau)$ is included in the picture). Note that $x_1$ is not necessarily the successor of $x_0$ in $X$. Write $B^{\tau}:= \{ b_{z} \in S^b, \, \, z \in (z_0, z_{tip(\tau)})$\}, $B^{\tau}_{x_0}:= \{ b_{z} \in B^{\tau} \,$ such that either $L(b_z) = x_0$ or $R(b_z) = x_0 \} \subset B^{\tau}$. As $\varphi$ is minimal, $\tau$ is not loose, so the set $B^{\tau}_{x_0}$ is not empty; note that $B^{\tau}_{x_0}$ may contain bands attaching $d_{x_0}$ and $d_{x_1}$, but as $S$ is a BKL-homogeneous braided Seifert surface, their sign would be equal to $e(b_{z_0}) = +$. Write $z_1:= \displaystyle\max_{b_z \in B^{\tau}}{z}$, that is, $b_{z_1}$ is the highest band between $coccyx(\tau)$ and $tip(\tau)$ (Figure $\ref{dibulema}_A$).

\vspace{0.2cm}

\noindent
2) Perform an inflation of sign $e(b_{z_0}) = +$ introducing a new band, $b_{\widetilde{z}}$, and a new disc, $d'_{x_0^+}$, so that $\widetilde{z} \in (z_1, z_{tip(\tau)})$ and of course ${x_0^+}$ is the successor of $x_0$ in $X'= X \bigcup \{n+1\}$ (Figure~{$\ref{dibulema}_B$}); each $d_{x_j}$ with $x_j > x_0$ will be pushed to the right, thus it becomes $d'_{x_j + 1}$, and $d'_{x_0^+} = d'_{x_0 + 1}$. As $b_{\widetilde{z}}$ is the only band attaching $d'_{x_0}$ and $d'_{x_0 + 1}$, the resulting surface is braided BKL-homogeneous.

\vspace{0.2cm}

\begin{figure}
\centering
\includegraphics[width = 11cm]{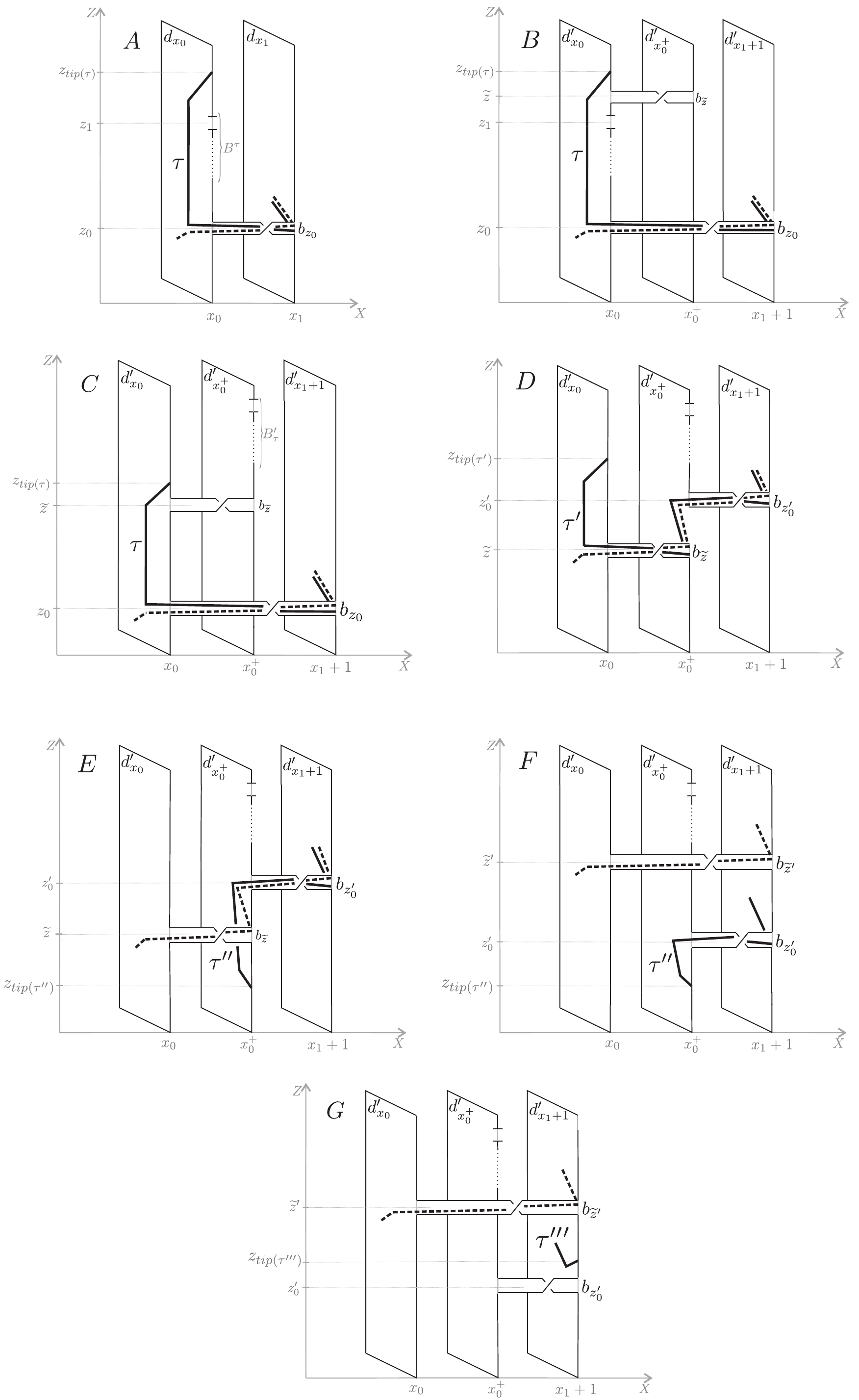}
\caption{\small{This figure illustrates the steps followed in proof of Lemma \ref{reducefi}.
}}
\label{dibulema}
\end{figure}

\noindent
3) As ${x_0 + 1}$ is the successor of $x_0$, the set $B^{\tau}$ can be slipped or slid up keeping the original order (concretely a slide up acts on those bands in  $B^{\tau}_{x_0}$ while a slip up is performed on the others). The slid up bands have just changed one of their attaching regions from $d_{x_0}$ to $d'_{x_0 + 1}$, as can be seen in Figure $\ref{dibulema}_C$, so the surface is still braided BKL-homogeneous.
\vspace{0.2cm}

\vspace{0.2cm}

\noindent
4) Sliding up $b_{z_0}$ over $b_{\widetilde{z}}$ (giving a new band $b_{z_0'}$) transforms $\tau$ into $\tau'$, which is loose (Figure $\ref{dibulema}_D$), and $\varphi$ into $\varphi'$. Apart from $\tau$, other rays of $\varphi$ can cross $b_{z_0}$. In fact, if there are exactly $k+1$ arcs of $\varphi$ contained in $b_{z_0}$ then $\delta_b(\varphi') = \delta_b(\varphi)+k-1$.

\vspace{0.2cm}

\noindent
5) Since $\tau$ is loose, $tip(\tau')$ can be pulled along the border of the surface as shown in Figure $\ref{dibulema}_E$, arising the new ray $\tau''$ in the new star $\varphi''$. Clearly $\delta_b(\tau'') = \delta_b(\tau') - 1$ and $\delta_b(\varphi'') = \delta_b(\varphi') - 1 = \delta_b(\varphi) + k - 1$. Notice that every arc originally contained in the region determined by $tail(\tau)$ and the segment joining $tip(\tau)$ and $coccyx(\tau)$ slides up to the new disc, without interfering with the other parts of the star, and without increasing the $\delta_b$ value of its ray. Comparing the original surface with the actual one, we have just transferred the attaching regions of the bands $b_z \in B^{\tau}_{x_0}$ in $\partial(d_{x_0})$ to $\partial(d'_{x_0 + 1})$, keeping the original order and signs, so at this point the surface is braided BKL-homogeneous.

\vspace{0.2cm}

\noindent
6) Slide up $b_{\widetilde{z}}$ over the band joining $d'_{x_0 + 1}$ and $d'_{x_1+1}$, that is, over $b_{z_0'}$, as shown in the picture, Figure $\ref{dibulema}_F$.

\vspace{0.2cm}

\noindent
7) The previous movement makes $\tau''$ loose again, so its final point, $tip(\tau'')$, can be pulled along the border of the surface as before (Figure $\ref{dibulema}_G$), arising the new ray $\tau'''$ in the new star $\varphi'''$. Clearly $\delta_b(\tau''') = \delta_b(\tau'') - 1 = \delta_b(\tau) - 1$; as other arcs crossing $b_{z_0}$ in the original situation keep their original $\delta_b$ value, we have $\delta_b(\varphi''')<\delta_b(\varphi)$.
\end{proof}

\begin{Cor} \label{estrellaendisco}
If $\varphi$ is an n-star on a BKL-homogeneous surface $S$, then there exists an ambient isotopy of $\mathbb{R}^3$ carrying $(S, \varphi)$ to $(S', \varphi')$, with $S'= S(w')$ a BKL-homogeneous braided surface and $\varphi' \subset S'^d$ a minimal braided n-star (that is, there is a disc of the standard handle decomposition of $S'$ containing $\varphi'$).
\end{Cor}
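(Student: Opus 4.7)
The plan is to iterate Lemma~\ref{reducefi} until the quantity $\delta_b$ drops to zero. First, by definition of BKL-homogeneous surface, an ambient isotopy of $\mathbb{R}^3$ carries $S$ to a BKL-homogeneous braided surface $S_0 = S(w_0)$; let $\varphi_0$ be the image of $\varphi$. A further small ambient isotopy puts $\varphi_0$ in transverse position with respect to the handle decomposition of $S_0$, and by then replacing $\varphi_0$ with a representative of its ambient isotopy class in $S_0$ that minimizes $\delta_b$, I may assume $\varphi_0$ is a minimal braided n-star on $S_0$.

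Next I would induct on $\delta_b(\varphi_0)$. If $\delta_b(\varphi_0) > 0$, Lemma~\ref{reducefi} produces a BKL-homogeneous braided surface $S_1 = S(w_1)$ and an ambient isotopy of $\mathbb{R}^3$ carrying $(S_0, \varphi_0)$ to $(S_1, \varphi_1)$, where $\varphi_1$ is a minimal braided n-star with $\delta_b(\varphi_1) < \delta_b(\varphi_0)$. Since the conclusion of the lemma guarantees minimality of $\varphi_1$, its hypotheses hold again, and iterating gives a strictly decreasing sequence $\delta_b(\varphi_0) > \delta_b(\varphi_1) > \cdots$ of non-negative integers. After finitely many steps this terminates at a star $\varphi'$ on a BKL-homogeneous braided surface $S' = S(w')$ with $\delta_b(\varphi') = 0$, and composing the intermediate ambient isotopies of $\mathbb{R}^3$ with the initial one produces the global isotopy sought.

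Finally, $\delta_b(\varphi') = 0$ means that no ray of $\varphi'$ meets any band of $S'$. Since the center of $\varphi'$ lies in $\mathrm{Int}\,(S')^d$ and each ray is a connected arc which can only pass from one disc to another by crossing a band, the entire star is contained in a single disc of the standard handle decomposition of $S'$; this yields the desired inclusion $\varphi' \subset (S')^d$. The main obstacle is the preparatory step of bringing $\varphi_0$ into minimal transverse position on a BKL-homogeneous braided representative of $S$; once that is done, the iteration is driven entirely by Lemma~\ref{reducefi}, and the total number of iterations is bounded by the initial value of $\delta_b$.
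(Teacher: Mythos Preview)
Your proof is correct and follows essentially the same approach as the paper: both arguments first invoke the definition of BKL-homogeneous surface to pass to a braided model, put the image of $\varphi$ in minimal transverse position, and then iterate Lemma~\ref{reducefi} until $\delta_b$ reaches zero. Your write-up is slightly more explicit about the transversality step and about why $\delta_b(\varphi')=0$ forces $\varphi'$ into a single disc, but the structure is identical.
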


\begin{proof}
By definition of BKL-homogeneous surface, there exists an ambient isotopy $I_t$ of $\mathbb{R}^3$ such that $I_1(S) = \overline{S} = S(\overline{w})$, with $\overline{w}$ a homogeneous BKL-word. Consider $I_1 (\varphi) = \overline{\varphi}$, which is an n-star on the surface $\overline{S}$, and reduce it till it is minimal (of course, this isotopy is easily extended to an ambient isotopy of the whole $\mathbb{R}^3$). If $\delta_b(\overline{\varphi}) = 0$ the n-star does not cross any band, so  $\overline{\varphi} \subset S'^d$ and $\varphi' = \overline{\varphi}$ and $S' = \overline{S}$. Otherwise, apply repeatedly Lemma \ref{reducefi}, which preserves the BKL-homogeneous character of the surface, until the image of $\varphi$ does not cross any band.
\end{proof}

\begin{Prop} \label{pegadohomogeneas}
Let $S_1$, $S_2$ be two BKL-homogeneous surfaces and let $S = S_1 * S_2$ be their Stallings plumbing; then there exist three BKL-homogeneous braided surfaces $S'$, $S_1'$ and $S_2'$ ambient isotopic to $S$, $S_1$ and $S_2$ respectively, such that $S' = S_1' * S_2'$ is the braided Stallings plumbing of $S_1'$ and $S_2'$.
\end{Prop}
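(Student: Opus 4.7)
The approach is to apply Corollary \ref{estrellaendisco} to each plumband so that the core $n$-star of each gluing $n$-patch lies inside a single disc of a BKL-homogeneous braided surface, and then to use twirls to bring that disc into the position required by the braided Stallings plumbing construction of Section \ref{seccionplumbing}.

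Recall that the Stallings plumbing $S = S_1 * S_2$ is specified by two $n$-patches $N_1 \subset S_1$ and $N_2 \subset S_2$, namely regular neighborhoods of $n$-stars $\varphi_1 \subset S_1$ and $\varphi_2 \subset S_2$, together with an orientation-preserving gluing homeomorphism $h : N_1 \to N_2$. The plan is first to apply Corollary \ref{estrellaendisco} separately to the pairs $(S_1, \varphi_1)$ and $(S_2, \varphi_2)$. This produces ambient isotopies of $\mathbb{R}^3$ carrying each $S_i$ to a BKL-homogeneous braided surface $\widetilde{S_i} = S(\widetilde{w_i})$ in which the image of $\varphi_i$ lies entirely inside a single disc $d^{\,i}_{x_i^\star}$ of the standard handle decomposition. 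By shrinking the regular neighborhood inside that disc, we may also assume that the image of $N_i$ is contained in $d^{\,i}_{x_i^\star}$, with its boundary and proper arcs lying in the interior of the disc.

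Next, apply a sequence of twirls to $\widetilde{S_1}$ to cyclically permute the indexing of its discs until $d^{\,1}_{x_1^\star}$ becomes the last disc, and similarly apply twirls to $\widetilde{S_2}$ until $d^{\,2}_{x_2^\star}$ becomes the first disc. Each twirl is an ambient isotopy that preserves BKL-homogeneity (cf.\ Figure \ref{movimientos}), so the resulting surfaces $S_1'$ and $S_2'$ are BKL-homogeneous braided surfaces ambient isotopic to $S_1$ and $S_2$. The braided Stallings plumbing $S' := S_1' * S_2'$ is then defined via these distinguished discs as gluing discs, and by construction $S' = S(w')$ for a BKL-word $w'$ obtained by shuffling $w_1'$ and a shifted copy of $w_2'$; in particular, $S'$ is a BKL-homogeneous braided surface.

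Finally, one verifies that $S'$ is ambient isotopic to $S$. Since the braided Stallings plumbing is itself a Stallings plumbing and the plumbands $S_i'$ are ambient isotopic to $S_i$ via isotopies carrying $\varphi_i$ into the gluing disc of $S_i'$, the two Stallings plumbings agree up to ambient isotopy provided the interleaving of the bands attached to the common disc is chosen to reproduce the cyclic pattern of the original gluing homeomorphism $h$ around the $n$-patch. The main obstacle is this last matching: among the shufflings permitted in the definition of braided Stallings plumbing (together, if necessary, with slips and slide moves to adjust heights of bands on the common disc without affecting BKL-homogeneity), one must exhibit one which realizes the specific isotopy class of $h$. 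This is the delicate combinatorial reconciliation between the abstract gluing data and the intrinsic structure of the braided plumbing, and is where the moves of Figure \ref{movimientos} are used in a careful, orchestrated way.
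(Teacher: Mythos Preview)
Your proposal is correct and follows essentially the same route as the paper: apply Corollary~\ref{estrellaendisco} to each plumband so that the gluing $n$-star lies in a single disc, then use twirls to place those discs in the extreme positions required by the braided plumbing. The only point where you are less sharp than the paper is the final ``matching $h$'' step: rather than invoking slips or slides, the paper observes that \emph{turns} (the move T in Figure~\ref{movimientos}) cyclically permute the heights of the bands on each surface, so after suitable turns the shuffle dictated by $J_1 \circ h \circ I_1^{-1}$ is directly realizable as a braided Stallings plumbing; no further moves are needed.
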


\begin{proof}
Let $\varphi_1 \subset S_1$, $\varphi_2 \subset S_2$ be the $n_1$-star and $n_2$-star used in the original plumbing, respectively. Applying Corollary \ref{estrellaendisco} we can assume that there exist two isotopies $I_t$ and $J_t$ such that $I_1(S_1)= S_1'$ with $S_1'$ a BKL-homogeneous braided surface with $I_1(\varphi_1) = \varphi_1'$ contained in a disc of $S_1'$, and the analogous statement for $J_t$ and $S_2$. Applying twirls to a surface does not affect its BKL-homogeneity, so we can consider $\varphi_1'$ contained in the rightmost disc in $S_1'$ and $\varphi_2'$ in the leftmost disc in $S_2'$. The order in the bands in the original plumbing was given by an orientation-preserving homeomorphism $h: N_1 \subset S_1 \longrightarrow N_2 \subset S_2$; so, up to turns of the original surfaces, the order in the bands in the plumbing $S_1' * S_2'$ will be given by $J_t \circ h \circ I_t^{-1}$. By construction, the surface $S_1' * S_2'$ is ambient isotopic to $S$.
\end{proof}

\begin{Prop} \label{suficiente}
If $S$ is a BKL-homogeneous surface deplumbed into surfaces $S_1$ and $S_2$, then $S_1$ and $S_2$ are BKL-homogeneous.
\end{Prop}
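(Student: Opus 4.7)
The plan is to reduce the deplumbing of $S$ to a combinatorial operation on the BKL-word of a braided BKL-homogeneous model of $S$, using Corollary \ref{estrellaendisco}. Concretely, let $\varphi \subset S$ be the n-star whose regular neighborhood is the patch $N = S \cap S^2$. Since $S$ is BKL-homogeneous, Corollary \ref{estrellaendisco} provides an ambient isotopy $I_t$ of $\mathbb{R}^3$ with $I_1(S) = S' = S(w')$ a BKL-homogeneous braided surface and $I_1(\varphi) = \varphi'$ a minimal n-star lying in a single disc $d$ of $S'$. The isotopy carries the sphere and the plumbands to $I_1(S^2)$ and $S_i' = I_1(S_i)$, which are ambient isotopic to $S_i$; the patch becomes $N' = N(\varphi') \subset d$. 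Hence it suffices to show that $S_1'$ and $S_2'$ are BKL-homogeneous.

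Next, after applying twirls and turns to $S'$ (which preserve BKL-homogeneity), I would bring $d$ to a convenient position and identify the deplumbing of $S'$ along $N'$ with a braided Stallings plumbing of the type described in Section \ref{seccionplumbing}. Inside $d$ the patch $N'$ separates the disc into the patch itself together with $n$ sectors, and the deplumbing sphere assigns each sector to one of its two sides. Because $N' \subset d$, no band of $S'$ meets the patch, so every band belongs entirely to one plumband, according to the side of the sphere on which its attaching region on $d$ lies (and by connectivity for the remaining discs and bands of $S'$). This partitions the bands of $S'$ into two subsets corresponding to $S_1'$ and $S_2'$.

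From this partition each plumband $S_i'$ inherits a braided structure: it consists of the patch $N'$, the sectors of $d$ assigned to $S_i'$ (which together with the patch form a sub-disc of $d$, isotopable to the standard rectangular form of a braided disc), and all the other discs and bands of $S'$ on its side. The BKL-word $w_i'$ of $S_i'$ is then obtained from $w'$ by deleting the generators corresponding to the bands of the complementary plumband and relabelling the disc indices. Since $w'$ is a homogeneous BKL-word, each generator $\sigma_{rs}$ occurs in $w'$ with a single sign, and this property is inherited by any sub-word; hence $w_1'$ and $w_2'$ are homogeneous BKL-words, so $S_1'$ and $S_2'$, and therefore $S_1$ and $S_2$, are BKL-homogeneous.

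The hard part will be the third step: verifying, in the braided setting with $\varphi' \subset d$, that the deplumbing really does yield two braided surfaces whose BKL-words arise from $w'$ by deletion of letters. This requires checking that the sector-to-plumband assignment is consistent with the connectivity of $S' \setminus d$, that each sub-disc obtained by cutting $d$ can be isotoped into the standard rectangular braided form without disturbing the heights and signs of the surviving bands, and that the resulting data assembles into genuine braided surfaces. Once this geometric decomposition is justified, the preservation of homogeneity under taking sub-words of a BKL-word makes the conclusion immediate.
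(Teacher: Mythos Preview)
Your plan is essentially the paper's proof: apply Corollary \ref{estrellaendisco} to push the plumbing locus into a single disc of a BKL-homogeneous braided model of $S$, then observe that the deplumbing sphere separates the remaining handles into two braided pieces whose BKL-words are obtained from $w'$ by deletion, hence are homogeneous.

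One point needs correcting. Your opening sentence takes ``the $n$-star $\varphi\subset S$ whose regular neighborhood is the patch $N$'', but no such star exists: every edge of $\partial N$ lies in the \emph{interior} of $S$ (each edge is a boundary arc for one plumband and a proper arc for the other), so $N$ is not an $n$-patch in $S$ for any $n$. The paper avoids this by choosing the stars $\varphi_1\subset S_1$ and $\varphi_2\subset S_2$ with a common center and working with the combined $(n_1+n_2)$-star $\psi=\varphi_1\cup\varphi_2$; Corollary \ref{estrellaendisco} is applied to $\psi$. Once $I_1(\psi)$ sits in a disc $\widehat d$, the image sphere $I_1(S^2)$ meets $S'$ only inside $\widehat d$, so every other disc and every band of $S'$ lies entirely on one side. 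That is how the paper disposes of your ``hard third step'': it does not twirl $\widehat d$ to an extremal position or rewrite the plumbands as explicit sub-words, but simply notes that each $S_i'$ (whose discs need not be consecutive in $S'$) is visibly a BKL-homogeneous braided surface.
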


\begin{proof}
Since $S = S_1 * S_2$, there exists a sphere $S^2$ separating $S^3$ as in Section \ref{seccionplumbing}, with $N = S_1 \bigcap S_2 \subset S^2$ being an  $n_i$-patch in $S_i$, denoted by $N_i$, regular neighborhood of a certain $n_i$-star $\varphi_i$ in $S_i$, $i=1,2$; the ``gluing order'' is given by $h: N_1 \longrightarrow N_2$. We can take $\varphi_1$ and $\varphi_2$ so that $\psi = \varphi_1 \bigcup \varphi_2$ is an $(n_1 + n_2)$-star on $N$. As $S$ is a BKL-homogeneous surface, by Corollary \ref{estrellaendisco} there exists an isotopy, $I_t$, carrying $S$ to $S' = S(w')$, with $w'$ a homogeneous BKL-word and $I_1(\psi)$ contained in a disc, $\widehat{d}$. As $S^2$ separates $S_1$ and $S_2$ in the original situation, $I_1(S^2)$ separates $I_1(S_1) = S_1'$ and $I_1(S_2) = S_2'$, that is, there are no bands attaching discs from $S_1'$ to discs from $S_2'$ other than $\widehat{d}$. Note that neither discs in $S_1$ nor discs in $S_2$ are necessarily consecutive. As $S'$ is a BKL-homogeneous braided surface, $S_1'$ and $S_2'$ are.
\end{proof}

\vspace{0.2cm}

The main theorem follows from Propositions \ref{pegadohomogeneas} and \ref{suficiente}:

\vspace{0.2cm}

\noindent \textbf{Theorem \ref{Teorema}.}
\emph{Let $S$ be a Stallings plumbing of Seifert surfaces $S_1$ and $S_2$; then $S$ is a BKL-homogeneous surface if and only if both $S_1$ and $S_2$ are BKL-homogeneous surfaces.}

\begin{Cor}\label{corolariopseudoalt}
Any generalized flat surface spanning a pseudoalternating link is a BKL-homogeneous surface. In particular, any pseudoalternating link is a BKL-homogeneous link.
\end{Cor}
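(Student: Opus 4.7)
The plan is to reduce the corollary to showing that each primitive flat surface is BKL-homogeneous, and then to apply Theorem~\ref{Teorema} inductively. By definition, a generalized flat surface $S$ spanning a pseudoalternating link is obtained by a finite iteration of Stallings plumbings of primitive flat surfaces: $S = ((P_1 * P_2) * \cdots ) * P_k$. The ``only if'' direction of Theorem~\ref{Teorema} is not needed here; what is required is Proposition~\ref{pegadohomogeneas}, which asserts that if $S_1$ and $S_2$ are BKL-homogeneous, then so is $S_1 * S_2$. Applying this at each stage of the iteration, an obvious induction on $k$ reduces the corollary to the statement that every primitive flat surface is BKL-homogeneous.

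For the base case, I would argue as follows. A primitive flat surface comes from a positive (or negative) diagram whose Seifert circles are non-nested. Since the Seifert circles are pairwise non-nested they can be arranged linearly in the plane, and with this arrangement the diagram presents the link as the closure of a braid whose Artin-word is positive (respectively negative), with the Seifert discs playing the role of the strands and each crossing of the diagram contributing a generator $\sigma_i$ of fixed sign. In terms of the BKL generators one has $\sigma_i = \sigma_{i,i+1}$, so the associated braided surface is $S(w)$ for a positive (or negative) BKL-word $w$; in other words the primitive flat surface is ambient isotopic to a BKL-positive (or BKL-negative) braided surface. By the remark immediately following the definition of BKL-homogeneous surfaces, every BKL-positive and every BKL-negative surface is BKL-homogeneous, so the base case is settled.

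Combining the two steps, every generalized flat surface is BKL-homogeneous, and consequently its boundary is a BKL-homogeneous link; this is precisely the statement of Corollary~\ref{corolariopseudoalt}. The main obstacle I anticipate is the passage, in the base case, from the projection surface given by Seifert's algorithm on a primitive flat diagram to an actual braided surface in the sense of Section~\ref{seccionbraidedsurfaces}: one must exhibit an ambient isotopy of $S^3$ taking the planar surface with its disc-band decomposition to the standard braided model $S(w)$, rather than merely producing a surface with the same boundary. Once this geometric identification is in hand (it follows from the non-nested arrangement of the Seifert circles, which lets one stretch them into horizontal parallel discs while dragging the bands along), the remainder of the argument is a formal induction invoking Proposition~\ref{pegadohomogeneas}.
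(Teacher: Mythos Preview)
Your overall strategy coincides with the paper's: reduce to primitive flat surfaces via Theorem~\ref{Teorema} (only the ``if'' direction, as you note), and show that primitive flat surfaces are BKL-positive or BKL-negative, hence BKL-homogeneous.

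There is one inaccuracy in your base case. You assert that once the non-nested Seifert circles are arranged linearly, the diagram becomes the closure of a braid given by a positive (or negative) \emph{Artin}-word. This fails in general: if the Seifert graph has a vertex of degree at least three, or contains a cycle, then in any linear ordering of the circles some band joins non-consecutive discs, and the corresponding letter is a genuine BKL generator $\sigma_{ij}$ with $j>i+1$, not an Artin generator. This does not actually damage your conclusion---a positive BKL-word is all you need, and your intermediate step $\sigma_i=\sigma_{i,i+1}$ is then unnecessary---but the sentence as written is incorrect. The paper does not argue the base case directly: it invokes Rudolph's result \cite{Rudolph2} that the projection surface of \emph{any} positive diagram is quasipositive (BKL-positive), together with its mirror for negative diagrams. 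That theorem is about surfaces, not merely links, so it also supplies the ambient isotopy to a braided surface that you correctly flag as the main obstacle.
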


\begin{proof}
By definition, a generalized flat surface is constructed by performing a finite number of Stallings plumbings of primitive flat surfaces. Then, by Theorem \ref{Teorema}, it is enough to show that primitive flat surfaces are BKL-homogeneous. Such a surface is the projection surface of either a positive or a negative diagram (with no nested Seifert circles). In \cite{Rudolph2} it is proved that the projection surface of a positive diagram is a BKL-positive surface (called quasipositive in Rudolph's paper); with the same argument applied to the mirror image, it follows that projection surfaces constructed from negative diagrams are BKL-negative. And both BKL-positive and BKL-negative surfaces are obvious examples of BKL-homogeneous surfaces.
\end{proof}

\begin{Cor} \label{corolariohom1}
Let $D$ be an oriented homogeneous diagram of an oriented homogeneous link $L$. Then its projection surface is a BKL-homogeneous surface. In particular, any homogeneous link is a BKL-homogeneous link.
\end{Cor}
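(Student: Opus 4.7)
The plan is to exhibit the projection surface $S_D$ as an iterated Stallings plumbing of positive and negative projection surfaces, one for each block of the Seifert graph $G_D$, and then conclude via Theorem~\ref{Teorema} together with Rudolph's result on positive diagrams.

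Because $D$ is homogeneous, each block $B_i$ of $G_D$ has all of its edges of a single sign, so the sub-diagram $D_i$ of $D$ consisting of the Seifert discs and crossings corresponding to the vertices and edges of $B_i$ is either entirely positive or entirely negative. I will then use the classical correspondence between the cut vertices of $G_D$ and deplumbing spheres for $S_D$: at each cut vertex $v$ the Seifert disc $d_v$ can be split along a properly embedded arc separating the bands coming from blocks on one side from those on the other, and a small sphere $S^{2}\subset S^{3}$ thickening this arc realizes a Stallings deplumbing of $S_D$ along $d_v$. Iterating at every cut vertex yields $S_D$ as an iterated Stallings plumbing of the projection surfaces $S_{D_i}$ of the block sub-diagrams.

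Each $S_{D_i}$ is then the projection surface of a positive or negative diagram, so by the result of Rudolph cited in the proof of Corollary~\ref{corolariopseudoalt} it is a BKL-positive or BKL-negative surface; in particular each plumband is BKL-homogeneous. An induction on the number of blocks using the sufficient direction of Theorem~\ref{Teorema}, namely Proposition~\ref{pegadohomogeneas}, then shows that $S_D$ itself is a BKL-homogeneous surface, and consequently $L=\partial S_D$ is a BKL-homogeneous link.

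The main obstacle I expect is the first step: producing a genuine Stallings deplumbing sphere at each cut vertex of $G_D$ so that the two sides are precisely the sub-surfaces dictated by the block decomposition. Checking that the intersection with $S_D$ is a patch (a regular neighborhood of a star in the sense of Section~\ref{seccionestrellas}) and that the resulting gluing homeomorphism is orientation-preserving is bookkeeping, but has to be done carefully in order to legitimately invoke Theorem~\ref{Teorema}.
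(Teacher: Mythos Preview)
Your approach is correct and is essentially the paper's own argument unpacked: the paper simply observes that the projection surface of a homogeneous diagram is a generalized flat surface and invokes Corollary~\ref{corolariopseudoalt}, whose proof is exactly what you propose (iterated plumbing into positive/negative projection surfaces, Rudolph for the base case, Theorem~\ref{Teorema} for the inductive step). The deplumbing at cut vertices that you flag as the main obstacle is precisely the (unproved) content of the assertion in Section~2 that such a projection surface is a generalized flat surface, so your caution there is well placed but the fact is standard.
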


\begin{proof}
As $L$ is a homogeneous link, it is pseudoalternating. In fact, the projection surface constructed from $D$, $S_D$, is a generalized flat surface spanning $L$; thus $S_D$ is BKL-homogeneous and $L$ is a BKL-homogeneous link.
\end{proof}

We just want to mention an example of homogeneous link which has a minimal non-homogeneous diagram in the sense of minimal crossing number. This question was posed by Peter Cromwell in \cite{CromwellHom}, and it was motivated by the fact that a non-alternating diagram of a prime alternating link cannot have minimal crossing number \cite{Murasuginonaltnonmim}. The example is the Perko’s knot given by the equivalent diagrams $10_{161} \equiv 10_{162}$, as the diagram $10_{161}$ is minimal and non-homogeneous, and the diagram $10_{162}$ is positive hence homogeneous.

\vspace{1.1cm}

\noindent \textbf{Acknowledgements} \,
I want to thank Pedro M. G. Manchón and Juan González-Meneses for introducing me in the study of braided surfaces. I am also grateful for their numerous valuable comments, their suggestions and corrections on preliminary versions of this paper. I would also like to thank Józef H. Przytycki for our conversations, which were very useful and helped me to clarify ideas. \\

%%%%%%%
\bibliographystyle{plain}
\bibliography{Bibliograf}

\end{document}